\documentclass[11pt,a4paper,reqno]{article}
\usepackage{amsmath, amssymb, amsthm}
\usepackage{fullpage}
\usepackage{hyperref}
\usepackage{pst-node}
\usepackage{color}

\usepackage[all]{xy}
\usepackage{changes} 
\colorlet{Changes@Color}{red}
\usepackage{tikz-cd}
\tikzcdset{arrow style=tikz, diagrams={>=stealth}}

\newcommand{\beq}{\begin{equation}}
\newcommand{\eeq}{\end{equation}}

\newcommand{\ot}{{\otimes}}

\numberwithin{equation}{section}

\theoremstyle{plain}
\newtheorem{thm}{Theorem}[section]
\newtheorem{lem}[thm]{Lemma}

 \newtheorem{cor}[thm]{Corollary}
\newtheorem{defi}[thm]{Definition}

\theoremstyle{remark}
\newtheorem{exa}[thm]{Example}
\newtheorem{rem}[thm]{Remark}

\newcommand{\di}{{\diamond_{B}}}
\newcommand{\BB}{{\overline{B}}}
\newcommand{\lan}{{\langle}}
\newcommand{\ran}{{\rangle}}

\DeclareMathOperator{\Hom}{Hom}
\newcommand*{\id}{\textup{id}}

\newcommand{\one}[1]{{#1}{}_{\scriptscriptstyle{(1)}}}
\newcommand{\two}[1]{{#1}{}_{\scriptscriptstyle{(2)}}}
\newcommand{\three}[1]{{#1}{}_{\scriptscriptstyle{(3)}}}

\renewcommand{\o}{{}_{\scriptscriptstyle{(1)}}}
\renewcommand{\t}{{}_{\scriptscriptstyle{(2)}}}
\renewcommand{\th}{{}_{\scriptscriptstyle{(3)}}}

\newcommand{\la}{{\triangleright}}
\newcommand{\ra}{{\triangleleft}}
\newcommand{\eps}{{\varepsilon}}
\DeclareMathOperator{\tens}{\otimes}

\newcommand{\M}{\mathcal{M}}

\newcommand{\cL}{\mathcal{L}}
\newcommand{\CL}{\mathcal{L}}

\newcommand{\CM}{\mathcal{M}}

\renewcommand{\>}{\rangle}
\newcommand{\rlbicross}{\triangleright\!\!\!\blacktriangleleft}
\allowdisplaybreaks

\title{{\bf Coquasi-bialgebroids and cocycle twisting}}

\author{Xiao Han${}^*$ and Shahn Majid\footnote{x.han@qmul.ac.uk, s.majid@qmul.ac.uk}
\\ \  \\
School of Mathematical Sciences, Queen Mary University of London\\ London E1 4NS, UK}

\begin{document}

\date{}

\maketitle
\parskip = .75 ex


\begin{abstract}
We introduce coquasi-bialgebroids over a noncommutative base algebra. 
Using Takeuchi's \(\times_B\)-coalgebra formalism, we require the coproduct to remain an algebra map into the Takeuchi product, while the product is associative only up to an invertible normalized \(3\)-cocycle. This gives a bialgebroid analogue of coquasi-bialgebras and provides a natural framework for cocycle-twisted bialgebroid constructions.

We develop the basic theory and prove a twisting theorem by convolution-invertible \(2\)-cochains. As a main class of examples, we construct coquasi Connes--Moscovici-type bialgebroids on \(B\otimes H\otimes B\), where \(H\) is a coquasi-bialgebra measuring an algebra \(B\), with twisting data \(\gamma:H\otimes H\to B\). We also give finite-group examples arising from a subgroup \(G\subseteq X\) and a choice of transversal. Finally, under finite projectivity assumptions, we describe the dual quasi-bialgebroid construction and its relation to Drinfeld-type twisting.

\end{abstract}

\begin{quote}  2010 Mathematics Subject Classification:  16T05, 16T15, 18D10.\end{quote}

\section{Introduction}

Hopf algebroids and bialgebroids provide a natural generalisation of Hopf algebras in which the base field is replaced by a possibly noncommutative algebra. This enlargement is essential in many geometric and categorical examples: groupoids rather than groups, Lie--Rinehart algebras rather than Lie algebras, quantum principal bundles rather than Galois objects. In such situations the coproduct no longer takes values in an ordinary tensor product over the ground field, but in a balanced tensor product over the base algebra, and the correct algebraic target of the coproduct is the Takeuchi product.  This leads to the \(\times_B\)-bialgebra, or bialgebroid, formalism introduced by Takeuchi \cite{Ta} and developed further in \cite{BM, Lu,schau4, Xu}.

A parallel development in Hopf algebra theory is the passage from Hopf algebras and bialgebras to quasi-Hopf, coquasi-Hopf, quasi-bialgebra and coquasi-bialgebra structures. In a coquasi-bialgebra, the coproduct remains coassociative, but the algebra product is associative only up to a convolution-invertible \(3\)-cocycle. Such structures arise naturally from group-theoretical constructions, monoidal categories, bicrossproduct constructions and gauge transformations. They also provide a convenient language for describing twists and associators in situations where strict associativity is too rigid.

The aim of this paper is to combine these two directions. We introduce a notion of coquasi-bialgebroid over a noncommutative base algebra \(B\). Roughly speaking, a left \(B\)-coquasi-bialgebroid is a \(\times_B\)-coalgebra \(\cL\) equipped with source and target maps from \(B\) and \(B^{\mathrm{op}}\), respectively. The failure of associativity is controlled by an invertible normalized \(3\)-cocycle $\Phi\in \operatorname{Hom}_{B^e-}(\cL\otimes_{B^e}\cL\otimes_{B^e}\cL,B)$.
The ordinary left bialgebroid case is recovered when \(\Phi\) is trivial. This definition is designed so that the coproduct remains compatible with the product, while the associativity constraint is weakened in exactly the way familiar from coquasi-bialgebras.

One subtle point is that, over a noncommutative base, the tensor product formalism is more delicate than over a field. The coproduct must land in a Takeuchi product, and iterated coproducts require care because the product \(\times_B\) is not strictly associative in general. We therefore begin by fixing the necessary algebraic conventions for \(B^e\)-modules, balanced tensor products and Takeuchi products. This makes the definition of a \(B\)-coalgebroid precise and allows the \(3\)-cocycle conditions for a coquasi-bialgebroid to be written in a form compatible with the base algebra.

The first main result is a twisting theorem for coquasi-bialgebroids. Given a left \(B\)-coquasi-bialgebroid \((\cL,\Phi)\) and a suitable normalized convolution-invertible \(2\)-cochain $\Sigma\in \operatorname{Hom}_{B^e-}(\cL\otimes_{B^e}\cL,B)$,
we define a new product on the underlying \(B\)-coalgebroid \(\cL\). The resulting object is again a left \(B\)-coquasi-bialgebroid, with a new \(3\)-cocycle \(\Phi^\Sigma\). In particular, starting from an ordinary bialgebroid, a \(2\)-cochain produces a canonical \(3\)-cocycle \(\partial\Sigma\). This is the bialgebroid analogue of the familiar passage from \(2\)-cochains to \(3\)-cocycles in the theory of coquasi-bialgebras.

The second main part of the paper concerns constructions. We show that coquasi-bialgebra data give rise to coquasi versions of Connes--Moscovici-type bialgebroids. Let \((H,\phi)\) be a coquasi-bialgebra and let \(B\) be an associative algebra equipped with a measuring action of \(H\). Suppose moreover that there is a convolution-invertible map $\gamma:H\otimes H\to B$
controlling the failure of the action to be strict. We construct a left \(B\)-coquasi-bialgebroid  structure on $B\otimes H\otimes B$.
The source, target, coproduct and counit are the expected Connes--Moscovici-type ones, while the product is twisted by \(\gamma\). The associator \(\Phi^\gamma\) is then computed explicitly in terms of the associator \(\phi\) of \(H\), the measuring of \(H\) on \(B\), and the twisting data \(\gamma\). When \(\gamma\) is trivial, this gives a particularly transparent construction associated to a strict module algebra over a coquasi-bialgebra.

These constructions are not merely formal. As a concrete family of examples, we use the group-theoretical coquasi-Hopf algebras associated to a finite group \(X\), a subgroup \(G\subseteq X\), and a choice of transversal \(M\) such that \(X=GM\). The associated bicrossproduct coquasi-Hopf algebra acts by measuring on the group algebra \(kG\), and the above construction yields explicit coquasi Connes--Moscovici-type bialgebroids over \(kG\). These examples illustrate how nontrivial associators arise naturally from factorisation data in finite group theory.

We also discuss the relation with ordinary Hopf-algebroid cocycle twisting. Under additional
compatibility assumptions, the twisted coquasi-bialgebroid structure on \(B \otimes H \otimes B\)
can be viewed as arising from a \(2\)-cochain twist of an ordinary Connes--Moscovici-type
bialgebroid. In special cases, when the resulting associator becomes trivial, one recovers
ordinary left and anti-left Hopf algebroids. This connects the present coquasi construction
with previously studied cocycle-twisted Hopf algebroids and cleft extensions of Hopf algebroids
\cite{BB,HM22}.

Finally, we give the dual construction. Under finite projectivity assumptions, the dual of a coquasi-bialgebroid is a quasi-bialgebroid. On the dual side, the algebra remains associative, 
while coassociativity of the coproduct is controlled by an invertible associator $\Phi\in \cL\times_B \cL\times_B \cL$. We show that cocycle twisting on the coquasi side corresponds to Drinfeld-type twisting on the quasi side. Thus the paper gives both the coquasi and quasi versions of the theory, extending the familiar duality between coquasi-bialgebras and quasi-bialgebras to the bialgebroid setting.

The paper is organised as follows. Section~2 recalls the necessary preliminaries on \(B^e\)-modules, Takeuchi products and \(\times_B\)-coalgebras, then introduces left \(B\)-coquasi-bialgebroids and proves the general twisting theorem. Section~3 constructs coquasi Connes--Moscovici-type bialgebroids from coquasi-bialgebras acting by measuring on an algebra \(B\), and gives finite group examples arising from subgroup-transversal data. Section~4 gives the dual theory of quasi-bialgebroids and explains how finite duality exchanges the coquasi cocycle twisting with quasi-bialgebroid Drinfeld twisting.

\section{Coquasi-bialgebroids}
The purpose of this section is to set up the general algebraic framework for coquasi-bialgebroids over a noncommutative base algebra. Since the base is not assumed to be commutative, 
the usual tensor-product conventions for coalgebras over a field have to be replaced by balanced tensor products and Takeuchi products. We first recall the necessary notation for
 \(B^e\)-modules and \(\times_B\)-coalgebras, and then introduce left \(B\)-coquasi-bialgebroids as bialgebroid-like objects whose associativity is controlled by a normalized
 convolution-invertible \(3\)-cocycle.

\subsection{Algebraic preliminaries} \label{sec2}

We begin by recalling the basic definitions and notation used throughout the paper. Let $B$ be an unital algebra over a field $k$. We denote the opposite algebra by $\BB$ and let $B\to \BB$, $b\mapsto\Bar{b}$ for any $b\in B$ be the obvious $k$-algebra antiisomorphism. Define $B^{e}:=B\ot \BB$, so $B$ and $\BB$ are obvious subalgebras of $B^{e}$. Let $M, N$ be $B^{e}$-bimodules. We define
\begin{align*}
    M\di N:=\int_{b} {}_{\Bar{b}}M\ot {}_{b}N:=&M\ot N/\langle \Bar{b}m\ot n-m\ot bn|b\in B, m\in M, n\in N\rangle\\
    M\ot_{B} N:=\int_{b} M_{b}\ot {}_{b}N:=&M\ot N/\langle mb\ot n-m\ot bn|b\in B, m\in M, n\in N\rangle\\
    M\ot_{\BB} N:=\int_{b} M_{\Bar{b}}\ot {}_{\Bar{b}}N:=&M\ot N/\langle m\Bar{b}\ot n-m\ot \Bar{b}n|b\in B, m\in M, n\in N\rangle\\
\end{align*}
For convenience, we also define $N\ot^{B}M=\int_{b} {}_{b}N\ot M_{b}$ and  $N\ot^{\BB}M=\int_{b} {}_{\Bar{b}}N\ot M_{\Bar{b}}$. Moreover, we define
\begin{align*}
    \int^{b}M_{\Bar{b}}\ot N_{b}:=\{\sum_{i}m_{i}\ot n_{i}\in M\ot N\quad |\quad m_{i}\Bar{b}\ot n_{i}=m_{i}\ot n_{i}b, \forall b\in B\}.
\end{align*}

The symbol $\int^{b}$ and $\int^{c}$ commute, also $\int_{b}$ and $\int_{c}$ commute. However, in general, the symbol $\int^{b}$ and $\int_{c}$ doesn't commute. For any $\BB$-bimodule $M$ and any $B$-bimodule $N$, we also define
\begin{align*}
    M\times_{B}N:=\int^{a}\int_{b} {_{\Bar{b}}M_{\Bar{a}}}\ot {_bN_a}.
\end{align*}
$M\times_{B}N$ is called Takeuchi product of $M$ and $N$. If $P$ is a $B^{e}$-bimodule, then $P\times_{B}N$ is a $B$-bimodule with $B$ acting on $P$. Similarly, $M\times_{B}P$ is a $\BB$-bimodule with $\BB$ acting on $P$. If both $M$ and $N$ are $B^{e}$-bimodule, then $M\times N$ is also a $B^{e}$-bimodule. However, the product \(\times_B\) is neither associative nor unital on the category of \(B^e\)-bimodules. For any $M,N, P\in {}_{B^{e}}\M_{B^{e}}$, we can define
\begin{align*}
    M\times_{B}P\times_{B}N:=\int^{a,b}\int_{c,d} {}_{\Bar{c}}M_{\Bar{a}}\ot {}_{c,\Bar{d}}P_{a, \Bar{b}}\ot {}_{d}N_{b},
\end{align*}
where $\int^{a,b}:=\int^{a}\int^{b}$ and $\int_{c,d}:=\int_{c}\int_{d}$. There are maps
\begin{align*}
    &\alpha:(M\times_{B}P)\times_{B} N\to M\times_{B}P\times_{B}N,\quad m\ot p\ot n\mapsto m\ot p\ot n,\\
    &\alpha':M\times_{B}(P\times N)\to M\times_{B}P\times_{B}N,\quad m\ot p\ot n\mapsto m\ot p\ot n.\\
\end{align*}
Notice that neither $\alpha$ nor $\alpha'$ are isomorphisms  in general. In particular, if all $B$-module and $\BB$-module structures are faithfully flat, then $\alpha$ and $\alpha'$ are isomorphisms. 

\subsection{Coalgebroids}

Here we recall the basic definitions (cf. \cite{BW}, \cite{Boehm}). Let $B$ be a unital algebra over a field $k$.
A {\em $B$-ring}   means a unital algebra in the monoidal category ${}_B\CM_B$ of $B$-bimodules. Likewise,  a  {\em $B$-coring} is a coalgebra in ${}_B\CM_B$. Morphisms of $B$-(co)rings are defined to be morphisms of (co)algebras, but in the category ${}_B\CM_B$.

To specify a unital \(B\)-ring \(\cL\) is equivalently to specify a unital \(k\)-algebra \(\cL\) together with an algebra map \(\eta:B\to \cL\). Left and right multiplication in $\cL$ pull back to left and right $B$-actions as a bimodule (so $bXc=\eta(b)X\eta(c)$ for all $b,c\in B$ and $X\in \cL$) and the product descends to the product $\mu_B:\cL\tens_B\cL\to \cL$ with $\eta$ the unit map. Conversely, given
$\mu_B$ we can pull back to an associative product on $\cL$ with unit $\eta(1)$.

Now suppose that $s:B\to \cL$ and $t:\BB\to \cL$ are algebra maps with images that commute. Then $\eta(b\tens c)=s(b)t(c)$ is an algebra map $\eta: B^e\to \cL$, where $B^e=B\tens \BB$, and is equivalent to making $\cL$ a $B^e$-ring. The left $B^e$-action part of this is equivalent to a $B$-bimodule structure
\begin{equation}\label{eq:rbgd.bimod}
b.X.c=b\Bar{c}X:= s(b) t(c)X
\end{equation}
for all $b,c\in B$ and $X\in \cL$. 


If $\cL$ and $\mathcal R$ are two $B^e$-rings, then the Takeuchi product $\cL\times_B\mathcal R$ is an algebra with multiplication given by $(\ell\otimes r)(\ell'\otimes r')=\ell\ell' \otimes rr'$ for $\ell\ot r,\ell'\ot r'\in \cL\times_B\mathcal R$.

\begin{defi}\cite{Ta}\label{def:left.bgd} Let $B$ be a unital algebra. A left \(B\)-coalgebroid, or \(\times_B\)-coalgebra, is a \(B^e\)-bimodule \(\cL\) equipped with two left \(B^e\)-module maps, a coproduct \(\Delta:\cL\to \cL\times_B \cL\) and a counit
 \(\varepsilon:\cL\to B\), satisfying
\begin{align*}
(1)\quad&\alpha\circ(\Delta\times_{B}\id)\circ\Delta=\alpha'\circ(\id\times_{B}\Delta)\circ\Delta:\cL\to \cL\times_{B}\cL\times_{B}\cL\\    (2)\quad&\varepsilon(X\o)X\t=\overline{\varepsilon(X\t)}X\o=X,
\end{align*}
for any $X\in\cL$, where we use the sumless Sweedler index to denote the coproduct, namely, $\Delta(X)=X\o\ot X\t$.

\end{defi}

In particular, a $B$-coalgebroid is a $B$-coring with the bimodule structure given by (\ref{eq:rbgd.bimod}).  Given a  $B$-coalgebroid $\cL$,   there is an algebra structure on $\Hom_{B^{e}-}(\cL\otimes_{B^e}\cL, B)$ with the (convolution) product and unit given by
\begin{align}
    f\star g(X, Y):=f(\one{X}, \one{Y})g(\two{X}, \two{Y}),\quad \tilde{\varepsilon}(X, Y)= \varepsilon(XY)
\end{align}
for all $X, Y\in \cL$ and $f, g\in \Hom_{B^{e}-}(\cL\otimes_{B^e}\cL, B)$.

\subsection{\bf Coquasi bialgebroids}\label{seccoquasi}

We now introduce the main general definition of the paper. A coquasi-bialgebroid is obtained from a left bialgebroid by weakening strict associativity of the algebra structure.
 The failure of associativity is controlled by a normalized convolution-invertible \(3\)-cocycle with values in the base algebra \(B\). When this \(3\)-cocycle is trivial, the definition 
reduces to the usual notion of a left bialgebroid. We also prove that convolution-invertible \(2\)-cochains twist coquasi-bialgebroids to new coquasi-bialgebroids, in direct analogy
 with the classical theory of coquasi-bialgebras.

\begin{defi}\label{def:coright.bgd} Let $B$ be a unital associative algebra. A left $B$-coquasi-bialgebroid $\cL$ is a unital not-necessarily associative algebra  equipped with algebra maps  $s:B\to \cL$ and $t:\overline{B}\to \cL$ such that $s(b)t(b')=t(b')s(b),\forall b,b'\in B$. Moreover, $\cL$ is a $\times_{B}$-coalgebra with the bimodule structure (\ref{eq:rbgd.bimod}) which is compatible in the sense:
\begin{itemize}
\item[(i)] The coproduct $\Delta: \cL\to \cL\times_B \cL$
 is an algebra map.
\item[(ii)] The counit $\varepsilon$ satisfies
\begin{equation*}\varepsilon(1_{\cL})=1_{B},\quad  \varepsilon(X\,\varepsilon(Y))=\varepsilon(XY)=\varepsilon(X\,\overline{\varepsilon(Y)})\end{equation*}
for all $X,Y\in \cL$ and $b\in B$.
\item[(iii)] There is an `\textup{invertible normalised 3-cocycle}' on
$\cL$ in the sense of a convolution invertible element $\Phi\in \Hom_{B^{e}-}(\cL\otimes_{B^e}\cL\otimes_{B^e}\cL, B)$ such that
\begin{align*}(1)\quad &\Phi(1_{\cL}, X, Y)=\Phi(X, 1_{\cL}, Y)=\Phi(X, Y, 1_{\cL})=\varepsilon(XY),\\
(2)\quad &\Phi(X, Y, Z\, b)=\Phi(X, Y, Z\, \overline{b}),\\
(3)\quad& \Phi(X_{\scriptscriptstyle{(1)}}, Y_{\scriptscriptstyle{(1)}}, Z_{\scriptscriptstyle{(1)}}W_{\scriptscriptstyle{(1)}})\Phi(X_{\scriptscriptstyle{(2)}}Y_{\scriptscriptstyle{(2)}}, Z_{\scriptscriptstyle{(2)}}, W_{\scriptscriptstyle{(2)}})\\
&=\Phi(X_{\scriptscriptstyle{(1)}}, \Phi(Y_{\scriptscriptstyle{(1)}}, Z_{\scriptscriptstyle{(1)}}, W_{\scriptscriptstyle{(1)}})Y_{\scriptscriptstyle{(2)}}Z_{\scriptscriptstyle{(2)}}, W_{\scriptscriptstyle{(2)}})\Phi(X_{\scriptscriptstyle{(2)}}, Y_{\scriptscriptstyle{(3)}}, Z_{\scriptscriptstyle{(3)}}),\\
(4)\quad & \Phi(X_{\scriptscriptstyle{(1)}}, Y_{\scriptscriptstyle{(1)}}, Z_{\scriptscriptstyle{(1)}})\,(X_{\scriptscriptstyle{(2)}}Y_{\scriptscriptstyle{(2)}})Z_{\scriptscriptstyle{(2)}}=\overline{\Phi(X_{\scriptscriptstyle{(2)}}, Y_{\scriptscriptstyle{(2)}}, Z_{\scriptscriptstyle{(2)}})}\,X_{\scriptscriptstyle{(1)}}(Y_{\scriptscriptstyle{(1)}}Z_{\scriptscriptstyle{(1)}})
\end{align*}
for all $X, Y, Z, W\in \cL$.
\end{itemize}
If $\Phi$ is trivial, i.e. $\Phi(X, Y, Z)=\varepsilon(XYZ)$, then $\cL$ is a left bialgebroid. A left bialgebroid $\cL$ over $B$ is a left Hopf algebroid (\cite{schau1}, Thm and Def 3.5.) if
\[\lambda: \cL\ot_{\BB}\cL\to \cL\di\cL,\quad
    \lambda(X\ot Y)=\one{X}\ot \two{X}Y\]
is invertible. Similarly, A left bialgebroid $\cL$ over $B$ is an anti-left Hopf algebroid if
\[\mu: \cL\ot_{B}\cL\to \cL\di \cL,\quad
    \mu(X\ot Y)=\one{X}Y\ot \two{X}\]
is invertible. 
\end{defi}

\begin{rem}   
Here (2) implies the minimal condition in that $\Phi^{-1}$ should be a right-handed 3-cocycle in the sense
\begin{align*}
    (X_{\scriptscriptstyle{(1)}}&Y_{\scriptscriptstyle{(1)}}, Z_{\scriptscriptstyle{(1)}}, W_{\scriptscriptstyle{(1)}})\Phi^{-1}(X_{\scriptscriptstyle{(2)}}, Y_{\scriptscriptstyle{(2)}}, Z_{\scriptscriptstyle{(2)}}W_{\scriptscriptstyle{(2)}})\\
&=\Phi^{-1}(X_{\scriptscriptstyle{(1)}}, Y_{\scriptscriptstyle{(1)}}, Z_{\scriptscriptstyle{(1)}})\Phi^{-1}(X_{\scriptscriptstyle{(2)}},
 \overline{\Phi^{-1}(Y_{\scriptscriptstyle{(3)}}, Z_{\scriptscriptstyle{(3)}}, W_{\scriptscriptstyle{(2)}})}\,Y_{\scriptscriptstyle{(2)}}Z_{\scriptscriptstyle{(2)}}, W_{\scriptscriptstyle{(1)}}).
\end{align*}
We can also see $\varepsilon((XY)Z)=\varepsilon(X(YZ))=:\varepsilon(XYZ)$ (and they both equal to $\varepsilon(X \,\overline{\varepsilon(Y \,\overline{\varepsilon(Z)}\,)}\,)$). Here (1) and (3) imply that
\[\Phi(X, Y, 1_{\cL})=\varepsilon(XY).\]
\end{rem}
\begin{thm}\label{twistprod1}
    Let $(\cL, \Phi)$ be a left $B$-coquasi-bialgebroid, and $\Sigma$ be an  unital convolution invertible element in $\Hom_{B^{e}-}(\cL\otimes_{B^e}\cL, B)$, such that $\Sigma(X, Y\,\overline{b})=\Sigma(X, Y\,b)$. If we define a new product on $\cL$ by
    \begin{align*}
    X\cdot_{\Sigma} Y:=\Sigma(\one{X}, \one{Y})\overline{\Sigma^{-1}(\three{X}, \three{Y})}\two{X}\two{Y},
\end{align*}
then the underlying \(B\)-coalgebroid \(\cL\), equipped with the new product, is a left \(B\)-coquasi-bialgebroid, denoted by \(\cL^\Sigma\), with the twisted 3-cocycle given by
\begin{align*}
   \Phi^{\Sigma}(X, Y, Z)=\Sigma(X_{\scriptscriptstyle{(1)}}, \Sigma(Y_{\scriptscriptstyle{(1)}}, Z_{\scriptscriptstyle{(1)}})Y_{\scriptscriptstyle{(2)}}Z_{\scriptscriptstyle{(2)}})\Phi(X_{\scriptscriptstyle{(2)}}, Y_{\scriptscriptstyle{(3)}}, Z_{\scriptscriptstyle{(3)}})\Sigma^{-1}(\overline{\Sigma^{-1}(X_{\scriptscriptstyle{(4)}}, Y_{\scriptscriptstyle{(5)}}})\,X_{\scriptscriptstyle{(3)}}Y_{\scriptscriptstyle{(4)}}, Z_{\scriptscriptstyle{(4)}}),
\end{align*}
for all $X, Y, Z\in \cL$.
\end{thm}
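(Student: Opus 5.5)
Our plan is to verify the axioms of Definition~\ref{def:coright.bgd} for $(\cL,\cdot_\Sigma,s,t,\Delta,\varepsilon,\Phi^\Sigma)$ one by one, mirroring the classical coquasi-bialgebra proof but tracking carefully where $B$ and $\BB$ act. First we check that $\cdot_\Sigma$ is well defined on the relevant balanced tensor products. The left $B^e$-linearity of $\Sigma$ and $\Sigma^{-1}$, together with the hypothesis $\Sigma(X,Y\overline{b})=\Sigma(X,Y b)$, lets the middle legs $\two{X}\two{Y}$ absorb the $B$- and $\BB$-actions. This makes the formula independent of representatives in the Takeuchi product $\cL\times_B\cL\times_B\cL$. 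We then check that $1_\cL$ is still a unit (using unitality of $\Sigma$ and $\Sigma^{-1}$ and the counit axiom), and that $s,t$ remain algebra maps with commuting images for the new product, i.e. $s(b)\cdot_\Sigma X=s(b)X$, $t(b)\cdot_\Sigma X=t(b)X$ and $X\cdot_\Sigma s(b)=Xs(b)$. These follow from $\Delta(s(b))=s(b)\ot 1$ and $\Delta(t(b))=1\ot t(b)$ together with unitality of $\Sigma$.

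Next come conditions (i) and (ii). For (i), we compute $\Delta(X\cdot_\Sigma Y)$ using that $\Delta$ is multiplicative for the old product and is a left $B^e$-module map. The $\Sigma$ factor moves to the first leg as a $B$-action and the $\overline{\Sigma^{-1}}$ factor to the last leg. Using the Takeuchi relations to shuttle base elements between legs, the result becomes $(\one{X}\cdot_\Sigma\one{Y})\ot(\two{X}\cdot_\Sigma\two{Y})$, exactly as in the classical case where the middle $\Sigma^{-1}\star\Sigma$ collapses to $\tilde\varepsilon$. For (ii), we apply $\varepsilon$ and use its properties with respect to products and the actions $b\mapsto b\cdot$, $\bar b\cdot$. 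This gives $\varepsilon(X\cdot_\Sigma Y)=\Sigma(\one{X},\one{Y})\,\varepsilon(\ldots)$, which reduces via $\Sigma\star\Sigma^{-1}=\tilde\varepsilon$ to $\varepsilon(XY)$. Since the counit identities are stated in terms of products with $\varepsilon(Y)$, which act through $s$ and $t$, they then carry over directly.

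The core is (iii). Convolution invertibility of $\Phi^\Sigma$ is immediate: it is a convolution product of three invertible pieces, namely $\Sigma\circ(\id\ot\cdot_\Sigma)$-type terms, $\Phi$, and the analogous $\Sigma^{-1}$ term. We write down the explicit inverse in the reverse order. Normalisation (1) follows from unitality of $\Sigma$ and (1) for $\Phi$, and (2) follows from the hypothesis on $\Sigma$ in the last slot. For (4), we expand $(X\cdot_\Sigma Y)\cdot_\Sigma Z$ and $X\cdot_\Sigma(Y\cdot_\Sigma Z)$ and use (4) for $\Phi$ in the middle. We then see that the surrounding $\Sigma$-factors assemble precisely into the definition of $\Phi^\Sigma$ on the left and $\overline{\Phi^\Sigma}$ on the right. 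Note that the second $\Sigma$-argument in $\Phi^\Sigma$ is essentially $Y\cdot_\Sigma Z$ with one counit collapsed. Finally, the pentagon (3) for $\Phi^\Sigma$ is the classical statement that $\Phi^\Sigma=\partial\Sigma\star\Phi\star\ldots$ remains a cocycle. We would prove it by substituting, cancelling adjacent $\Sigma\star\Sigma^{-1}$ pairs, and reducing to (3) for $\Phi$.

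The main obstacle is this pentagon verification, together with the bookkeeping in (i). Over a noncommutative base, every cancellation $\Sigma\star\Sigma^{-1}=\tilde\varepsilon$ requires the two factors to be adjacent in the correct $B$ versus $\BB$ slot, and intermediate expressions live in iterated Takeuchi products, where $\alpha,\alpha'$ need not be isomorphisms. To manage this, we would first try to express everything via the convolution algebra $\Hom_{B^e-}(\cL^{\ot_{B^e}n},B)$. We would prove two auxiliary lemmas: $\Sigma(X,\,b\,Y)=\Sigma(X\bar b,Y)$-type balancing identities, and the identity $\Sigma(\one X,\one Y)\two X\two Y\cdots$ for moving base elements. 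With these in hand, the pentagon becomes a formal manipulation identical to the field case.
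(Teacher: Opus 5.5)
Your overall plan matches the paper's: check the axioms one by one, write down an explicit inverse of $\Phi^\Sigma$, derive (4) from (4) for $\Phi$ after cancelling the $\Sigma$-factors, and prove the pentagon by expansion and cancellation. However, your justification of axiom (2) for $\Phi^\Sigma$ has a real hole, and it is exactly the step the paper proves first.

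In $\Phi^\Sigma(X,Y,Z\,b)$ the element $b$ starts on $Z_{(1)}$. Using the hypothesis on $\Sigma$ and the Takeuchi relation $Z_{(i)}\overline{b}\ot_B Z_{(i+1)}=Z_{(i)}\ot_B Z_{(i+1)}b$, you can push it through the two nested $\Sigma$'s. It then lands in the last slot of $\Phi$, so you also need (2) for $\Phi$. Finally it lands in the last slot of $\Sigma^{-1}(\overline{\Sigma^{-1}(X_{(4)},Y_{(5)})}X_{(3)}Y_{(4)},Z_{(4)})$, where you need $\Sigma^{-1}(X,Y\overline{b})=\Sigma^{-1}(X,Yb)$. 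This follows neither from $B^e$-linearity nor from the hypothesis on $\Sigma$ alone. The paper proves it by writing $\Sigma^{-1}=\Sigma^{-1}\star\Sigma\star\Sigma^{-1}$ and placing $\overline b$ on $Y_{(1)}$. It then transports $\overline b$ to $Y_{(3)}$ via a Takeuchi move, the hypothesis applied to the middle $\Sigma$, and a second Takeuchi move.

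Relatedly, the auxiliary balancing identity you plan to prove, $\Sigma(X,bY)=\Sigma(X\overline{b},Y)$, is false as written. Already for the trivial cochain $\Sigma=\tilde\varepsilon$ on an ordinary bialgebroid, taking $X=1$ and $Y=s(c)$, it reads $bc=cb$. What you actually have available is:
\begin{itemize}
\item the $\ot_{B^e}$-balancings $\Sigma(Xb,Y)=\Sigma(X,bY)$ and $\Sigma(X\overline b,Y)=\Sigma(X,\overline bY)$;
\item the Takeuchi moves;
\item the hypothesis on $\Sigma$, together with its $\Sigma^{-1}$-version above.
\end{itemize}
Note also that well-definedness of $\cdot_\Sigma$ comes from the Takeuchi property of $\Delta$, not from the hypothesis on $\Sigma$.

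Two further corrections concern the pentagon.

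First, $\Phi^\Sigma=A\star\Phi\star C$, where $A(X,Y,Z)=\Sigma(X,\Sigma(Y_{(1)},Z_{(1)})Y_{(2)}Z_{(2)})$ and $C(X,Y,Z)=\Sigma^{-1}(\overline{\Sigma^{-1}(X_{(2)},Y_{(2)})}X_{(1)}Y_{(1)},Z)$. So $\Phi$ is sandwiched between the two halves of $\partial\Sigma=A\star C$; since the convolution algebra is noncommutative, this is not $\partial\Sigma\star\Phi\star\cdots$. Likewise, invertibility of $A$ is not literally $\Sigma\star\Sigma^{-1}=\tilde\varepsilon$. It consists of the identities $A\star A'=A'\star A=\varepsilon(XYZ)$, with $A'(X,Y,Z)=\Sigma^{-1}(X,\overline{\Sigma^{-1}(Y_{(2)},Z_{(2)})}Y_{(1)}Z_{(1)})$. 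The paper records these, and checking them requires a Takeuchi move.

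Second, the pentagon does not reduce to (3) for $\Phi$ alone. After the cancellations you must apply (4) for $\Phi$ twice: once to rebracket $(YZ)W$ into $Y(ZW)$ inside the outer $\Sigma$, and once to rebracket $X(YZ)$ into $(XY)Z$ inside the outer $\Sigma^{-1}$. You then move the resulting $\Phi$-values next to the middle $\Phi$ by Takeuchi and balancing moves. Only after that do the two sides match under (3).
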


\begin{proof}
A direct computation shows that the coproduct is compatible with the new product and that its image still lies in $\cL\times_{B}\cL$.  We first check $\Sigma^{-1}(X, Y\,\overline{b})=\Sigma^{-1}(X, Y\, b)$. Indeed,
\begin{align*}
    \Sigma^{-1}(X, Y\,\overline{b})=&\Sigma^{-1}(X\o, Y\o\,\overline{b})\Sigma(X\t, Y\t)\Sigma^{-1}(X\th, Y\th)\\
    =&\Sigma^{-1}(X\o, Y\o)\Sigma(X\t, Y\t\,b)\Sigma^{-1}(X\th, Y\th)\\
    =&\Sigma^{-1}(X\o, Y\o)\Sigma(X\t, Y\t\,\overline{b})\Sigma^{-1}(X\th, Y\th)\\
    =&\Sigma^{-1}(X\o, Y\o)\Sigma(X\t, Y\t\,)\Sigma^{-1}(X\th, Y\th\, b)\\
    =&\Sigma^{-1}(X, Y\,b).
\end{align*}
It follows directly that \(\Phi^\Sigma\) is unital and belongs to $\Hom_{B^{e}-}(\cL\otimes_{B^e}\cL\otimes_{B^e}\cL, B)$, moreover it satisfies (2). We can show the inverse of $\Phi^{\Sigma}$ is given by
\begin{align}
    (\Phi^{\Sigma})^{-1}(X, Y, Z)=\Sigma(\Sigma(X_{\scriptscriptstyle{(1)}}, Y_{\scriptscriptstyle{(1)}})X_{\scriptscriptstyle{(2)}}Y_{\scriptscriptstyle{(2)}}, Z_{\scriptscriptstyle{(1)}})\Phi^{-1}(X_{\scriptscriptstyle{(3)}}, Y_{\scriptscriptstyle{(3)}}, Z_{\scriptscriptstyle{(2)}})\Sigma^{-1}(X_{\scriptscriptstyle{(4)}}, \overline{\Sigma^{-1}(Y_{\scriptscriptstyle{(5)}}, Z_{\scriptscriptstyle{(4)}}})\,Y_{\scriptscriptstyle{(4)}}Z_{\scriptscriptstyle{(3)}}),
\end{align}
where we use the fact that
\begin{align*}
    \Sigma(X_{\scriptscriptstyle{(1)}}, \Sigma(Y_{\scriptscriptstyle{(1)}}, Z_{\scriptscriptstyle{(1)}})Y_{\scriptscriptstyle{(2)}}Z_{\scriptscriptstyle{(2)}})\Sigma^{-1}(X_{\scriptscriptstyle{(2)}}, \overline{\Sigma^{-1}(Y_{\scriptscriptstyle{(4)}}, Z_{\scriptscriptstyle{(4)}})}Y_{\scriptscriptstyle{(3)}}Z_{\scriptscriptstyle{(3)}})&=\varepsilon(XYZ),\\
    \Sigma^{-1}(X_{\scriptscriptstyle{(1)}}, \overline{\Sigma^{-1}(Y_{\scriptscriptstyle{(2)}}, Z_{\scriptscriptstyle{(2)}})}Y_{\scriptscriptstyle{(1)}}Z_{\scriptscriptstyle{(1)}})\Sigma(X_{\scriptscriptstyle{(2)}}, \Sigma(Y_{\scriptscriptstyle{(3)}}, Z_{\scriptscriptstyle{(3)}})Y_{\scriptscriptstyle{(4)}}Z_{\scriptscriptstyle{(4)}})&=\varepsilon(XYZ).
\end{align*}
We next show (4) for $\Phi^\Sigma$:
\begin{align*}
    \overline{\Phi^{\Sigma}(X_{\scriptscriptstyle{(2)}}, Y_{\scriptscriptstyle{(2)}}, Z_{\scriptscriptstyle{(2)}})}&X_{\scriptscriptstyle{(1)}}\cdot_\Sigma(Y_{\scriptscriptstyle{(1)}}\cdot_\Sigma Z_{\scriptscriptstyle{(1)}})\\
    = & \overline{\Sigma(X_{\scriptscriptstyle{(4)}}, s(\Sigma(Y_{\scriptscriptstyle{(6)}}, Z_{\scriptscriptstyle{(6)}}))Y_{\scriptscriptstyle{(7)}}Z_{\scriptscriptstyle{(7)}})\Phi(X_{\scriptscriptstyle{(5)}}, Y_{\scriptscriptstyle{(8)}}, Z_{\scriptscriptstyle{(8)}})\Sigma^{-1}(t(\Sigma^{-1}(X_{\scriptscriptstyle{(7)}}, Y_{\scriptscriptstyle{(10)}}))X_{\scriptscriptstyle{(6)}}Y_{\scriptscriptstyle{(9)}}, Z_{\scriptscriptstyle{(9)}})}\\
    &\quad \Sigma(X_{\scriptscriptstyle{(1)}}, \Sigma(Y_{\scriptscriptstyle{(1)}}, Z_{\scriptscriptstyle{(1)}})\,Y_{\scriptscriptstyle{(2)}}Z_{\scriptscriptstyle{(2)}}) \overline{\Sigma^{-1}(X_{\scriptscriptstyle{(3)}},  \overline{\Sigma^{-1}(Y_{\scriptscriptstyle{(5)}}, Z_{\scriptscriptstyle{(5)}})}Y_{\scriptscriptstyle{(4)}}Z_{\scriptscriptstyle{(4)}})}\,X_{\scriptscriptstyle{(2)}}(Y_{\scriptscriptstyle{(3)}}Z_{\scriptscriptstyle{(3)}})\\
    =& \overline{\Phi(X_{\scriptscriptstyle{(3)}}, Y_{\scriptscriptstyle{(4)}}, Z_{\scriptscriptstyle{(4)}})\Sigma^{-1}( \overline{\Sigma^{-1}(X_{\scriptscriptstyle{(5)}}, Y_{\scriptscriptstyle{(6)}})}\,X_{\scriptscriptstyle{(4)}}Y_{\scriptscriptstyle{(5)}}, Z_{\scriptscriptstyle{(5)}})}\\
    &\quad
    \Sigma(X_{\scriptscriptstyle{(1)}}, \Sigma(Y_{\scriptscriptstyle{(1)}}, Z_{\scriptscriptstyle{(1)}})Y_{\scriptscriptstyle{(2)}}Z_{\scriptscriptstyle{(2)}})X_{\scriptscriptstyle{(2)}}(Y_{\scriptscriptstyle{(3)}}Z_{\scriptscriptstyle{(3)}})\\
    =& \overline{\Sigma^{-1}( \overline{\Sigma^{-1}(X_{\scriptscriptstyle{(5)}}, Y_{\scriptscriptstyle{(6)}})}\,X_{\scriptscriptstyle{(4)}}Y_{\scriptscriptstyle{(5)}}, Z_{\scriptscriptstyle{(5)}})}
    \Sigma(X_{\scriptscriptstyle{(1)}}, \Sigma(Y_{\scriptscriptstyle{(1)}}, Z_{\scriptscriptstyle{(1)}})Y_{\scriptscriptstyle{(2)}}Z_{\scriptscriptstyle{(2)}})\\
    &\quad  \overline{\Phi(X_{\scriptscriptstyle{(3)}}, Y_{\scriptscriptstyle{(4)}}, Z_{\scriptscriptstyle{(4)}})}\,X_{\scriptscriptstyle{(2)}}(Y_{\scriptscriptstyle{(3)}}Z_{\scriptscriptstyle{(3)}})\\
    =& \overline{\Sigma^{-1}( \overline{\Sigma^{-1}(X_{\scriptscriptstyle{(5)}}, Y_{\scriptscriptstyle{(6)}})}\,X_{\scriptscriptstyle{(4)}}Y_{\scriptscriptstyle{(5)}}, Z_{\scriptscriptstyle{(5)}})}\\
    &\quad
    \Sigma(X_{\scriptscriptstyle{(1)}}, \Sigma(Y_{\scriptscriptstyle{(1)}}, Z_{\scriptscriptstyle{(1)}})Y_{\scriptscriptstyle{(2)}}Z_{\scriptscriptstyle{(2)}})\,\Phi(X_{\scriptscriptstyle{(2)}}, Y_{\scriptscriptstyle{(3)}}, Z_{\scriptscriptstyle{(3)}})(X_{\scriptscriptstyle{(3)}}Y_{\scriptscriptstyle{(4)}})Z_{\scriptscriptstyle{(4)}}\\
    =&\Phi^{\Sigma}(X_{\scriptscriptstyle{(1)}}, Y_{\scriptscriptstyle{(1)}}, Z_{\scriptscriptstyle{(1)}})(X_{\scriptscriptstyle{(2)}}\cdot_{\Sigma}Y_{\scriptscriptstyle{(2)}})\cdot_{\Sigma}Z_{\scriptscriptstyle{(2)}}.
\end{align*}
Finally, we show (3) for $\Phi^\Sigma$. On the one hand,
\begin{align*}
    \Phi^{\Sigma}(X_{\scriptscriptstyle{(1)}}&, Y_{\scriptscriptstyle{(1)}}, Z_{\scriptscriptstyle{(1)}}\cdot_\Sigma W_{\scriptscriptstyle{(1)}})\Phi^{\Sigma}(X_{\scriptscriptstyle{(2)}}\cdot_\Sigma Y_{\scriptscriptstyle{(2)}}, Z_{\scriptscriptstyle{(2)}}, W_{\scriptscriptstyle{(2)}})\\
    =&\Sigma\Big(X_{\scriptscriptstyle{(1)}}, \Sigma(Y_{\scriptscriptstyle{(1)}}, \Sigma(Z_{\scriptscriptstyle{(1)}}, W_{\scriptscriptstyle{(1)}})Z_{\scriptscriptstyle{(2)}}W_{\scriptscriptstyle{(2)}})Y_{\scriptscriptstyle{(2)}}(Z_{\scriptscriptstyle{(3)}}W_{\scriptscriptstyle{(3)}})\Big)\Phi(X_{\scriptscriptstyle{(2)}}, Y_{\scriptscriptstyle{(3)}}, Z_{\scriptscriptstyle{(4)}}W_{\scriptscriptstyle{(4)}})\\
    &\quad \Sigma^{-1}( \overline{\Sigma^{-1}(X_{\scriptscriptstyle{(4)}}, Y_{\scriptscriptstyle{(5)}})}\,X_{\scriptscriptstyle{(3)}}Y_{\scriptscriptstyle{(4)}},  \overline{\Sigma^{-1}(Z_{\scriptscriptstyle{(6)}}, W_{\scriptscriptstyle{(6)}})}\,Z_{\scriptscriptstyle{(5)}}W_{\scriptscriptstyle{(5)}})\\
    &\quad \Sigma(\Sigma(X_{\scriptscriptstyle{(5)}}, Y_{\scriptscriptstyle{(6)}})X_{\scriptscriptstyle{(6)}}Y_{\scriptscriptstyle{(7)}}, \Sigma(Z_{\scriptscriptstyle{(7)}}, W_{\scriptscriptstyle{(7)}})Z_{\scriptscriptstyle{(8)}}W_{\scriptscriptstyle{(8)}})\\
    &\quad \Phi(X_{\scriptscriptstyle{(7)}}Y_{\scriptscriptstyle{(8)}}, Z_{\scriptscriptstyle{(9)}}, W_{\scriptscriptstyle{(9)}})\Sigma^{-1}\Big( \overline{\Sigma^{-1}\big( \overline{\Sigma^{-1}(X_{\scriptscriptstyle{(10)}}, Y_{\scriptscriptstyle{(11)}})}\,X_{\scriptscriptstyle{(9)}}Y_{\scriptscriptstyle{(10)}}, Z_{\scriptscriptstyle{(11)}}\big)}(X_{\scriptscriptstyle{(8)}}Y_{\scriptscriptstyle{(9)}})Z_{\scriptscriptstyle{(10)}}, W_{\scriptscriptstyle{(10)}}\Big)\\
    =&\Sigma\Big(X_{\scriptscriptstyle{(1)}}, \Sigma(Y_{\scriptscriptstyle{(1)}}, \Sigma(Z_{\scriptscriptstyle{(1)}}, W_{\scriptscriptstyle{(1)}})Z_{\scriptscriptstyle{(2)}}W_{\scriptscriptstyle{(2)}})\,Y_{\scriptscriptstyle{(2)}}(Z_{\scriptscriptstyle{(3)}}W_{\scriptscriptstyle{(3)}})\Big)\Phi(X_{\scriptscriptstyle{(2)}}, Y_{\scriptscriptstyle{(3)}}, Z_{\scriptscriptstyle{(4)}}W_{\scriptscriptstyle{(4)}})\\
    &\quad \Phi(X_{\scriptscriptstyle{(3)}}Y_{\scriptscriptstyle{(4)}}, Z_{\scriptscriptstyle{(5)}}, W_{\scriptscriptstyle{(5)}})\Sigma^{-1}\Big(\overline{\Sigma^{-1}\big(\overline{\Sigma^{-1}(X_{\scriptscriptstyle{(6)}}, Y_{\scriptscriptstyle{(7)}})}\,X_{\scriptscriptstyle{(5)}}Y_{\scriptscriptstyle{(6)}}, Z_{\scriptscriptstyle{(7)}}\big)}(X_{\scriptscriptstyle{(4)}}Y_{\scriptscriptstyle{(5)}})Z_{\scriptscriptstyle{(6)}}, W_{\scriptscriptstyle{(6)}}\Big).
\end{align*}
On the other hand,
\begin{align*}
    \Phi^{\Sigma}(X_{\scriptscriptstyle{(1)}}&, \Phi^{\Sigma}(Y_{\scriptscriptstyle{(1)}}, Z_{\scriptscriptstyle{(1)}}, W_{\scriptscriptstyle{(1)}})Y_{\scriptscriptstyle{(2)}}\cdot_{\Sigma}Z_{\scriptscriptstyle{(2)}}, W_{\scriptscriptstyle{(2)}})\Phi^{\Sigma}(X_{\scriptscriptstyle{(2)}}, Y_{\scriptscriptstyle{(3)}}, Z_{\scriptscriptstyle{(3)}})\\
    =&\Sigma\Big(X_{\scriptscriptstyle{(1)}}, \Sigma(Y_{\scriptscriptstyle{(1)}}, \Sigma(Z_{\scriptscriptstyle{(1)}}, W_{\scriptscriptstyle{(1)}})\,Z_{\scriptscriptstyle{(2)}}W_{\scriptscriptstyle{(2)}})\Phi(Y_{\scriptscriptstyle{(2)}}, Z_{\scriptscriptstyle{(3)}}, W_{\scriptscriptstyle{(3)}})\Sigma^{-1}(\overline{\Sigma^{-1}(Y_{\scriptscriptstyle{(4)}}, Z_{\scriptscriptstyle{(5)}})}Y_{\scriptscriptstyle{(3)}}Z_{\scriptscriptstyle{(4)}}, W_{\scriptscriptstyle{(4)}})\\    &\quad \Sigma(\Sigma(Y_{\scriptscriptstyle{(5)}}, Z_{\scriptscriptstyle{(5)}})\,Y_{\scriptscriptstyle{(6)}} Z_{\scriptscriptstyle{(7)}}, W_{\scriptscriptstyle{(5)}})\,(Y_{\scriptscriptstyle{(7)}}Z_{\scriptscriptstyle{(8)}})W_{\scriptscriptstyle{(6)}}\Big)\Phi(X_{\scriptscriptstyle{(2)}}, Y_{\scriptscriptstyle{(8)}}Z_{\scriptscriptstyle{(9)}}, W_{\scriptscriptstyle{(7)}})\\
    &\quad \Sigma^{-1}\Big(\overline{\Sigma^{-1}(X_{\scriptscriptstyle{(4)}}, \overline{\Sigma^{-1}(Y_{\scriptscriptstyle{(11)}}, Z_{\scriptscriptstyle{(12)}})}\,Y_{\scriptscriptstyle{(10)}}Z_{\scriptscriptstyle{(11)}})}\,X_{\scriptscriptstyle{(3)}}(Y_{\scriptscriptstyle{(9)}}Z_{\scriptscriptstyle{(10)}}), W_{\scriptscriptstyle{(8)}}\Big)\\
    &\quad \Sigma(X_{\scriptscriptstyle{(5)}}, \Sigma(Y_{\scriptscriptstyle{(12)}}, Z_{\scriptscriptstyle{(13)}})Y_{\scriptscriptstyle{(13)}}Z_{\scriptscriptstyle{(14)}})\Phi(X_{\scriptscriptstyle{(6)}}, Y_{\scriptscriptstyle{(14)}}, Z_{\scriptscriptstyle{(15)}})\Sigma^{-1}(\overline{\Sigma^{-1}(X_{\scriptscriptstyle{(8)}}, Y_{\scriptscriptstyle{(16)}})}\,X_{\scriptscriptstyle{(7)}}Y_{\scriptscriptstyle{(15)}}, Z_{\scriptscriptstyle{(16)}})\\
    =&\Sigma\Big(X_{\scriptscriptstyle{(1)}}, \Sigma(Y_{\scriptscriptstyle{(1)}}, \Sigma(Z_{\scriptscriptstyle{(1)}}, W_{\scriptscriptstyle{(1)}})Z_{\scriptscriptstyle{(2)}}W_{\scriptscriptstyle{(2)}})\Phi(Y_{\scriptscriptstyle{(2)}}, Z_{\scriptscriptstyle{(3)}}, W_{\scriptscriptstyle{(3)}})(Y_{\scriptscriptstyle{(3)}}Z_{\scriptscriptstyle{(4)}})W_{\scriptscriptstyle{(4)}}\Big)\Phi(X_{\scriptscriptstyle{(2)}}, Y_{\scriptscriptstyle{(4)}}Z_{\scriptscriptstyle{(5)}}, W_{\scriptscriptstyle{(5)}})\\
    &\quad \Sigma^{-1}\Big(X_{\scriptscriptstyle{(3)}}(Y_{\scriptscriptstyle{(5)}}Z_{\scriptscriptstyle{(6)}}), W_{\scriptscriptstyle{(6)}}\Big) \Phi(X_{\scriptscriptstyle{(4)}}, Y_{\scriptscriptstyle{(6)}}, Z_{\scriptscriptstyle{(7)}})\Sigma^{-1}(\overline{\Sigma^{-1}(X_{\scriptscriptstyle{(6)}}, Y_{\scriptscriptstyle{(8)}})}\,X_{\scriptscriptstyle{(5)}}Y_{\scriptscriptstyle{(7)}}, Z_{\scriptscriptstyle{(8)}})\\
    =&\Sigma\Big(X_{\scriptscriptstyle{(1)}}, \Sigma(Y_{\scriptscriptstyle{(1)}}, \Sigma(Z_{\scriptscriptstyle{(1)}}, W_{\scriptscriptstyle{(1)}})Z_{\scriptscriptstyle{(2)}}W_{\scriptscriptstyle{(2)}})\overline{\Phi(Y_{\scriptscriptstyle{(3)}}, Z_{\scriptscriptstyle{(4)}}, W_{\scriptscriptstyle{(4)}})}\,Y_{\scriptscriptstyle{(2)}}(Z_{\scriptscriptstyle{(3)}}W_{\scriptscriptstyle{(3)}})\Big)\Phi(X_{\scriptscriptstyle{(2)}}, Y_{\scriptscriptstyle{(4)}}Z_{\scriptscriptstyle{(5)}}, W_{\scriptscriptstyle{(5)}})\\
    &\quad \Sigma^{-1}\Big(\overline{\Phi(X_{\scriptscriptstyle{(4)}}, Y_{\scriptscriptstyle{(6)}}, Z_{\scriptscriptstyle{(7)}})}\,X_{\scriptscriptstyle{(3)}}(Y_{\scriptscriptstyle{(5)}}Z_{\scriptscriptstyle{(6)}}), W_{\scriptscriptstyle{(6)}}\Big) \Sigma^{-1}(\overline{\Sigma^{-1}(X_{\scriptscriptstyle{(6)}}, Y_{\scriptscriptstyle{(8)}})}\,X_{\scriptscriptstyle{(5)}}Y_{\scriptscriptstyle{(7)}}, Z_{\scriptscriptstyle{(8)}})\\
    =&\Sigma\Big(X_{\scriptscriptstyle{(1)}}, \Sigma(Y_{\scriptscriptstyle{(1)}}, \Sigma(Z_{\scriptscriptstyle{(1)}}, W_{\scriptscriptstyle{(1)}})Z_{\scriptscriptstyle{(2)}}W_{\scriptscriptstyle{(2)}})\,Y_{\scriptscriptstyle{(2)}}(Z_{\scriptscriptstyle{(3)}}W_{\scriptscriptstyle{(3)}})\Big)\Phi(X_{\scriptscriptstyle{(2)}}, \Phi(Y_{\scriptscriptstyle{(3)}}, Z_{\scriptscriptstyle{(4)}}, W_{\scriptscriptstyle{(4)}})Y_{\scriptscriptstyle{(4)}}Z_{\scriptscriptstyle{(5)}}, W_{\scriptscriptstyle{(5)}})\\
    &\quad \Sigma^{-1}\Big(\Phi(X_{\scriptscriptstyle{(3)}}, Y_{\scriptscriptstyle{(5)}}, Z_{\scriptscriptstyle{(6)}})(X_{\scriptscriptstyle{(4)}}Y_{\scriptscriptstyle{(6)}})Z_{\scriptscriptstyle{(7)}}, W_{\scriptscriptstyle{(6)}}\Big) \Sigma^{-1}(\overline{\Sigma^{-1}(X_{\scriptscriptstyle{(6)}}, Y_{\scriptscriptstyle{(8)}})}\,X_{\scriptscriptstyle{(5)}}Y_{\scriptscriptstyle{(7)}}, Z_{\scriptscriptstyle{(8)}})\\
    =&\Sigma\Big(X_{\scriptscriptstyle{(1)}}, \Sigma(Y_{\scriptscriptstyle{(1)}}, \Sigma(Z_{\scriptscriptstyle{(1)}}, W_{\scriptscriptstyle{(1)}})Z_{\scriptscriptstyle{(2)}}W_{\scriptscriptstyle{(2)}})\,Y_{\scriptscriptstyle{(2)}}(Z_{\scriptscriptstyle{(3)}}W_{\scriptscriptstyle{(3)}})\Big)\Phi(X_{\scriptscriptstyle{(2)}}, \Phi(Y_{\scriptscriptstyle{(3)}}, Z_{\scriptscriptstyle{(4)}}, W_{\scriptscriptstyle{(4)}})Y_{\scriptscriptstyle{(4)}}Z_{\scriptscriptstyle{(5)}}, W_{\scriptscriptstyle{(5)}})\\
    &\quad \Phi(X_{\scriptscriptstyle{(3)}}, Y_{\scriptscriptstyle{(5)}}, Z_{\scriptscriptstyle{(6)}})\Sigma^{-1}\Big(\overline{\Sigma^{-1}(\overline{\Sigma^{-1}(X_{\scriptscriptstyle{(6)}}, Y_{\scriptscriptstyle{(8)}})}X_{\scriptscriptstyle{(5)}}Y_{\scriptscriptstyle{(7)}}, Z_{\scriptscriptstyle{(8)}})}\,(X_{\scriptscriptstyle{(4)}}Y_{\scriptscriptstyle{(6)}})Z_{\scriptscriptstyle{(7)}}, W_{\scriptscriptstyle{(6)}}\Big).
\end{align*}
These are equal by (3) for $\Phi$. \end{proof}

As a special case, given a usual bialgebroid $\CL$ (over $B$), we let $C^{2}(\cL, B)$ be the group of unital convolution-invertible maps $\Sigma\in \Hom_{B^{e}-}(\cL\otimes_{B^e}\cL, B)$, obeying $\Sigma(X, Y\overline{b})=\Sigma(X, Yb)$ in line with previous sections. Then 

\begin{cor}
For any $\Sigma\in C^{2}(\cL, B)$, we can construct a 3-cocycle $\partial\Sigma$,
\begin{align}\label{boundary map1}
    \partial\Sigma(X, Y, Z)=\Sigma(X_{\scriptscriptstyle{(1)}}, \Sigma(Y_{\scriptscriptstyle{(1)}}, Z_{\scriptscriptstyle{(1)}})Y_{\scriptscriptstyle{(2)}}Z_{\scriptscriptstyle{(2)}})\Sigma^{-1}(\overline{\Sigma^{-1}(X_{\scriptscriptstyle{(3)}}, Y_{\scriptscriptstyle{(4)}})}\,X_{\scriptscriptstyle{(2)}}Y_{\scriptscriptstyle{(3)}}, Z_{\scriptscriptstyle{(3)}}),
\end{align}
where $\Sigma^{-1}$ is the convolution-inverse of $\Sigma$, $X,Y$ and $Z$ are elements of $\cL$, making $\CL^\Sigma$ a coquasi-bialgebroid.
\end{cor}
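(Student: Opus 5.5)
The plan is to obtain the corollary as the special case $\Phi$ trivial of Theorem~\ref{twistprod1}. A usual left bialgebroid $\cL$ is a left $B$-coquasi-bialgebroid with $\Phi(X,Y,Z)=\varepsilon(XYZ)$, as noted after Definition~\ref{def:coright.bgd}. So the first step is to check that this trivial $\Phi$ satisfies (iii). Its convolution inverse is itself, since $\varepsilon(X\o Y\o Z\o)\varepsilon(X\t Y\t Z\t)=\varepsilon(XYZ)$ by the counit property and multiplicativity of $\Delta$. Normalisation (1) is clear. Condition (2) follows from $\varepsilon(X\,b)=\varepsilon(X\,\overline{b})$, which is a consequence of (ii). Condition (4) is just associativity. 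For the cocycle condition (3), both sides reduce to $\varepsilon(XYZW)$ using the counit axioms of Definition~\ref{def:left.bgd} and (ii).

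Next I would apply Theorem~\ref{twistprod1} to $(\cL,\varepsilon(\cdot\cdot\cdot))$ and $\Sigma\in C^{2}(\cL,B)$. The hypotheses on $\Sigma$ are exactly those defining $C^2(\cL,B)$. The theorem then says that $\cL^\Sigma$, equipped with the product $\cdot_\Sigma$, is a left $B$-coquasi-bialgebroid with 3-cocycle $\Phi^\Sigma$. It remains to identify $\Phi^\Sigma$ with $\partial\Sigma$ in (\ref{boundary map1}). Substitute $\Phi(X\t,Y\th,Z\th)=\varepsilon(X\t Y\th Z\th)$ into the formula for $\Phi^\Sigma$, and then remove this middle factor using the counit.

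This removal is the only step needing care, because the factor sits between two $B$-valued maps and the legs are coupled via $\times_B$. I would write the product of $B$-values as $\Sigma(\cdots)\,\varepsilon(X\t Y\th Z\th)\,\Sigma^{-1}(\cdots)$. The left $B^e$-linearity of $\Sigma^{-1}$ in its first argument gives $\varepsilon(\ldots)\Sigma^{-1}(U,V)=\Sigma^{-1}(\varepsilon(\ldots)U,V)$. Then I would use $\Delta(XY)=\Delta(X)\Delta(Y)$ together with $\varepsilon(P\o)P\t=P$, where $P=X\t Y\th Z\th$, to absorb the factor into the next Sweedler legs. After this absorption and a renumbering of indices, the expression becomes exactly (\ref{boundary map1}).

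I expect the main obstacle to be this bookkeeping: making sure the absorption is legitimate inside the Takeuchi product, in particular that the $Z$-legs, which appear with a left action in $\Sigma^{-1}(\,\cdot\,,Z)$, combine correctly. If it becomes awkward, I would instead simply define $\partial\Sigma:=\Phi^\Sigma$ for trivial $\Phi$, which makes the corollary immediate from Theorem~\ref{twistprod1}, and verify the explicit formula separately. That formula also shows $\partial\Sigma$ is a 3-cocycle in the sense of Definition~\ref{def:coright.bgd}, since it is the associator of the coquasi-bialgebroid $\cL^\Sigma$.
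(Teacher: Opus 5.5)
Your overall route is the paper's: the corollary is stated there simply as the case $\Phi(X,Y,Z)=\varepsilon(XYZ)$ of Theorem~\ref{twistprod1}. Your checks that this trivial $\Phi$ satisfies (iii), and that $C^2(\cL,B)$ is exactly the class of $\Sigma$ the theorem allows, are fine. The gap is in the only step with real content: identifying $\Phi^\Sigma$ with the displayed $\partial\Sigma$.

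The mechanism you propose is to pull $\varepsilon(X\t Y\th Z\th)$ into the first slot of $\Sigma^{-1}$ and then use $\varepsilon(P\o)P\t=P$ for $P=X\t Y\th Z\th$. This does not apply. After absorption, the first slot is $\varepsilon(X\t Y\th Z\th)\,\overline{\Sigma^{-1}(X_{\scriptscriptstyle{(4)}},Y_{\scriptscriptstyle{(5)}})}\,X\th Y_{\scriptscriptstyle{(4)}}$, while $Z_{\scriptscriptstyle{(4)}}$ sits in the \emph{second} slot. So the leg $P\t=X\th Y_{\scriptscriptstyle{(4)}}Z_{\scriptscriptstyle{(4)}}$ never appears as a product, and the counit identity has nothing to act on. Your fallback of defining $\partial\Sigma:=\Phi^\Sigma$ does not close this, since the explicit formula is part of the statement.

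The missing idea is to detach the $Z$-leg first. Write $T=\overline{\Sigma^{-1}(X_{\scriptscriptstyle{(4)}},Y_{\scriptscriptstyle{(5)}})}$. Then
\begin{align*}
\varepsilon(X\t Y\th Z\th)\,\Sigma^{-1}\big(T\,X\th Y_{\scriptscriptstyle{(4)}},Z_{\scriptscriptstyle{(4)}}\big)
&=\varepsilon\big(X\t Y\th\,\overline{\varepsilon(Z\th)}\big)\,\Sigma^{-1}\big(T\,X\th Y_{\scriptscriptstyle{(4)}},Z_{\scriptscriptstyle{(4)}}\big)\\
&=\varepsilon(X\t Y\th)\,\Sigma^{-1}\big(T\,X\th Y_{\scriptscriptstyle{(4)}}\,\varepsilon(Z\th),Z_{\scriptscriptstyle{(4)}}\big)\\
&=\varepsilon(X\t Y\th)\,\Sigma^{-1}\big(T\,X\th Y_{\scriptscriptstyle{(4)}},Z\th\big).
\end{align*}
The three equalities are justified as follows.
\begin{itemize}
\item[(a)] The first is the $\overline{\varepsilon(\cdot)}$ form of (ii).
\item[(b)] The second is the Takeuchi condition on the $Y$-legs, $Y\th\,\overline{c}\otimes Y_{\scriptscriptstyle{(4)}}=Y\th\otimes Y_{\scriptscriptstyle{(4)}}\,c$ with $c=\varepsilon(Z\th)$.
\item[(c)] The third moves $s(c)$ across $\otimes_{B^e}$ in $\Sigma^{-1}$ and uses $\varepsilon(Z\th)Z_{\scriptscriptstyle{(4)}}=Z\th$.
\end{itemize}

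Only now does your absorption work. Left $B$-linearity of $\Sigma^{-1}$ puts $\varepsilon(X\t Y\th)$ into the first slot, and it commutes past $T$ because $s$ and $t$ commute. Then $\varepsilon(Q\o)Q\t=Q$ for $Q=X\t Y\th$ (via $\Delta(XY)=\Delta(X)\Delta(Y)$) gives, after renumbering, exactly the displayed formula for $\partial\Sigma$.
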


In principle, this can be organised into a non-abelian cohomology theory, provided that, as in the usual theory of coquasi-Hopf algebras \cite{Ma:book}, one also keeps track of the changed product. One way to do this would be to consider a 3-cocycle as  pair consisting of the product and $\phi$ defined on a unital $B$-coring, modulo transformations as the theorem by unital convolution-invertible maps.

\section{\bf Constructions for coquasi-bialgebroids}\label{secCM}

Having established the general formalism, we now turn to constructions. The aim of this section is to show that natural examples of coquasi-bialgebroids arise from coquasi-bialgebras acting,
 in a possibly non-strict sense, on an associative algebra \(B\). The main construction is a coquasi analogue of the Connes--Moscovici bialgebroid on \(B\otimes H\otimes B\), where the
 associativity constraint is governed by both the associator of \(H\) and the twisting data measuring the failure of strict module-algebra associativity.

\subsection{Coquasi Connes-Moscovici bialgebroids}

We first recall the ordinary Connes--Moscovici-type bialgebroid, motivated by the
Hopf algebra of transverse geometry of Connes and Moscovici \cite{CM},
and its non-trivial \(2\)-cocycle version in the bialgebroid framework
\cite{schau4}. 
We then replace the Hopf algebra by a coquasi-bialgebra and weaken the module-algebra condition accordingly. The resulting object has the same underlying \(B\)-coalgebroid
 as in the classical case, but its multiplication is no longer strictly associative. Instead, its associativity is controlled by an explicit \(3\)-cocycle built from the associator of \(H\) and 
the twisting map \(\gamma\).

More precisely, given  a measuring  $\la$ of $H$ on $B$ and $\gamma: H\ot H\to B$ an unital convolution invertible map, such that
    \begin{align*}
        &h\la(g\la b)=\gamma(h\o, g\o)(h_{\scriptscriptstyle{(2)}}g_{\scriptscriptstyle{(2)}})\la b\gamma^{-1}(h_{\scriptscriptstyle{(3)}}, g_{\scriptscriptstyle{(3)}}),\quad 1_{H}\la b=b,\\
        &(\one{h}\triangleright\gamma(\one{g},\one{f}))\gamma(\two{h}, \two{g}\two{f})=\gamma(\one{h}, \one{g})\gamma(\two{h}\two{g}, f),
    \end{align*}
for all $h, g, f\in H$ and $b\in B$, then the underlying vector space $B\ot H\ot B$ is a left $B$ bialgebroid, with the $B$-coring structure:
    \[s(b)=b\ot 1\ot 1,\quad t(b)=1\ot 1\ot b,\quad \Delta(b\ot h\ot b')=b\ot h\o\ot 1\ot_{B} 1\ot h_{\scriptscriptstyle{(2)}}\ot b',\quad \varepsilon(b\ot h\ot b')=bb'\varepsilon(h).\]
Moreover, the algebra structure is
    \[(a\ot h\ot a')(b\ot g\ot b')=a(h\o\la b)\gamma(h_{\scriptscriptstyle{(2)}}, g\o)\ot h_{\scriptscriptstyle{(3)}}g_{\scriptscriptstyle{(2)}}\ot \gamma^{-1}(h_{\scriptscriptstyle{(4)}}, g_{\scriptscriptstyle{(3)}})(h_{\scriptscriptstyle{(5)}}\la b')a',\]
    for all $h, g\in H$ and $a, a', b, b'\in B$. In the case that $\gamma$ is trivial, i.e. $\gamma(h, g)=\varepsilon(hg)$ for any $h,g\in H$, we can see $B\ot H\ot B$ is a left and anti-left Hopf algebroid. More precisely,  \begin{align*}
        X_{+}\ot_{\BB}X_{-}:=\lambda^{-1}(X\di 1)=(b\ot h\o\ot 1)\ot_{\BB} (S(h\th)\la b'\ot S(h\t)\ot 1),
    \end{align*}
    and
    \begin{align*}
        X_{[+]}\ot_{B}X_{[-]}:=\mu^{-1}(1\di X)=(1\ot h\th\ot b')\ot_{B}(1\ot S^{-1}(h\t)\ot S^{-1}(h\o)\la b),
    \end{align*}
    for any $X=b\ot h\ot b'\in B\ot H\ot B$.  Indeed, we can see on the one hand,
    \begin{align*}
        X\o{}_{+}\ot_{B^{op}}X\o{}_{-}X\t=&(b\ot h\o\ot 1)\ot(1\ot1\ot S(h\t)\la b')\\
        =&(b\ot h\ot b')\ot (1\ot 1\ot 1),
    \end{align*}
    on the other hand
    \begin{align*}
        X_{+}\o\di X_{+}\t X_{-}=&(b\ot h\o\ot 1)\ot(b'\ot 1\ot 1)\\
        =&(b\ot h\ot b')\ot (1\ot 1\ot 1).
    \end{align*}
    The anti-left Hopf structure can be similarly proved.

We now relax the cocycle condition on \(\gamma\). More precisely, we allow \(H\) to be a coquasi-bialgebra and let \(B\) be an algebra equipped with a measuring by \(H\). Although a coquasi-bialgebra is a coquasi-bialgebroid in the special case \(B=k\), we recall the definition for use in the computations below. Quasi-Hopf algebras were introduced by Drinfeld \cite{Drinfeld1990},
 and we follow the coquasi conventions of \cite{Ma:book}.

\begin{defi}
    A coquasi-bialgebra \(H\) is a coalgebra equipped with a unital, not-necessarily associative algebra structure whose product and unit are coalgebra morphisms, together with a
 convolution-invertible linear map  $\phi: H\ot H\ot H\to k$, such that\begin{itemize}
        \item [(1)] $\phi(h, 1, g)=\varepsilon(hg)$.
        \item[(2)] $\phi(h\o, g\o, f\o l\o )\phi(h_{\scriptscriptstyle{(2)}}g_{\scriptscriptstyle{(2)}}, f_{\scriptscriptstyle{(2)}}, l_{\scriptscriptstyle{(2)}})=\phi(g\o , f\o , l\o )\phi(h\o , g_{\scriptscriptstyle{(2)}}f_{\scriptscriptstyle{(2)}}, l_{\scriptscriptstyle{(2)}})\phi(h_{\scriptscriptstyle{(2)}}, g_{\scriptscriptstyle{(3)}}, f_{\scriptscriptstyle{(3)}})$.
        \item [(3)] $\phi(h\o , g\o , f\o )(h_{\scriptscriptstyle{(2)}} g_{\scriptscriptstyle{(2)}}) f_{\scriptscriptstyle{(2)}}=h\o (g\o f\o )\phi(h_{\scriptscriptstyle{(2)}}, g_{\scriptscriptstyle{(2)}}, f_{\scriptscriptstyle{(2)}})$,
    \end{itemize}for any $h, g,f, l\in H$.
\end{defi}

We recall the standard notion of a measuring of a coalgebra on an algebra, in the
sense familiar from Hopf algebra actions on rings \cite{mont}.

\begin{defi}
    Let $(H, \phi)$ be a coquasi-bialgebra and $B$ a unital associative algebra. A map $\la: H\ot B\to B$ is a \textup{measuring} of $H$ on $B$ if $h\la(ab)=(h\o \la a)(h_{\scriptscriptstyle{(2)}}\la b)$ and $h\la 1_{B}=\varepsilon(h)1_{B}$ for all $h\in H$ and $a,b \in B$.
\end{defi}

We call $B$ a strict left $H$-module algebra if $\la$ is a measuring such that $(hg)\la b=h\la(g\la b)$ and $1_{H}\la b=b$. This condition is rather strong in the coquasi setting; the following theorem uses a weaker, non-strict version of an \(H\)-module algebra. The crossed-product and cleft-extension viewpoint for coquasi-Hopf algebras was
developed in \cite{Balan2010,Balan2008}, and our formulas may be viewed as bialgebroid
analogues of this construction over a noncommutative base.

\begin{thm}\label{thm. coquasi CM bialgebroid}
    Let $\la: H\ot B\to B$ be a measuring of a coquasi-bialgebra $(H, \phi)$ on an unital associative algebra $B$. If $\gamma: H\ot H\to B$ is an unital convolution invertible map such that
    \[h\la(g\la b)=\gamma(h\o , g\o )((h_{\scriptscriptstyle{(2)}}g_{\scriptscriptstyle{(2)}})\la b)\gamma^{-1}(h_{\scriptscriptstyle{(3)}}, g_{\scriptscriptstyle{(3)}}),\quad 1_{H}\la b=b,\]
    for any $h, g\in H$ and $b\in B$, then the underlying vector space $B\ot H\ot B$ is a left $B$-coquasi-bialgebroid, with the $B$-coalgebroid structure:
    \[s(b)=b\ot 1\ot 1\quad t(b)=1\ot 1\ot b\quad \Delta(b\ot h\ot b')=b\ot h\o \ot 1\ot_{B} 1\ot h_{\scriptscriptstyle{(2)}}\ot b'\quad \varepsilon(b\ot h\ot b')=bb'\varepsilon(h).\]
    And the (nonassociative unital) algebra structure:
    \[(a\ot h\ot a')(b\ot g\ot b')=a(h\o \la b)\gamma(h_{\scriptscriptstyle{(2)}}, g\o )\ot h_{\scriptscriptstyle{(3)}}g_{\scriptscriptstyle{(2)}}\ot \gamma^{-1}(h_{\scriptscriptstyle{(4)}}, g_{\scriptscriptstyle{(3)}})(h_{\scriptscriptstyle{(5)}}\la b')a',\]
    for any $h, g\in H$ and $a, a', b, b'\in B$.
    The `invertible normalised 3-cocycle' $\Phi^{\gamma}$ is given by
    \begin{align*}
        \Phi^{\gamma}(X&, Y, Z)=a(h\o \la b)(h_{\scriptscriptstyle{(2)}}\la(g\o \la c))(h_{\scriptscriptstyle{(3)}}\la\gamma(g_{\scriptscriptstyle{(2)}}, f\o ))\gamma(h_{\scriptscriptstyle{(4)}}, g_{\scriptscriptstyle{(3)}}f_{\scriptscriptstyle{(2)}})\\
    &\phi(h_{\scriptscriptstyle{(5)}}, g_{\scriptscriptstyle{(4)}}, f_{\scriptscriptstyle{(3)}})\gamma^{-1}(h_{\scriptscriptstyle{(6)}}g_{\scriptscriptstyle{(5)}}, f_{\scriptscriptstyle{(4)}})\gamma^{-1}(h_{\scriptscriptstyle{(7)}}, g_{\scriptscriptstyle{(6)}})(h_{\scriptscriptstyle{(8)}}\la(g_{\scriptscriptstyle{(7)}}\la c'))(h_{\scriptscriptstyle{(9)}}\la b')a',
    \end{align*}
  for $X=a\ot h\ot a'$, $Y=b\ot g\ot b'$ and $Z=c\ot f\ot c'$.
\end{thm}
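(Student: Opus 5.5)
We verify the axioms of Definition~\ref{def:coright.bgd} one by one. The work is organised so that every identity reduces to three inputs: the measuring property of $\la$, the twisted action condition $h\la(g\la b)=\gamma(h\o,g\o)((h\t g\t)\la b)\gamma^{-1}(h\th,g\th)$, and the axioms of $(H,\phi)$.

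\textbf{Coring and unit axioms.} The $B^e$-bimodule structure, $\Delta$ and $\varepsilon$ are exactly the classical Connes--Moscovici ones. Hence the $\times_B$-coalgebra axioms hold as in the strict case. These include coassociativity (via the identification $\cL\times_B\cL\times_B\cL\cong B\ot H\ot H\ot H\ot B$, up to the obvious balancing) and counitality.

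Next we check that $s$ and $t$ are algebra maps with commuting images. Using $1_H\la b=b$, unitality of $\gamma$ and $h\la 1=\varepsilon(h)$, we get $s(a)s(b)=s(ab)$ and $t(a)t(b)=t(ba)$. Both $s(b)t(c)$ and $t(c)s(b)$ equal $b\ot 1\ot c$. Unitality of $1\ot 1\ot 1$ follows in the same way.

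\textbf{Compatibility (i) and (ii).} We show that $\Delta$ lands in the Takeuchi product and is multiplicative. Expanding $\Delta$ of a product gives
\[a(h\o\la b)\gamma(h\t,g\o)\ot h\th g\t\ot 1\ot_B 1\ot h_{\scriptscriptstyle(4)}g\th\ot\gamma^{-1}(h_{\scriptscriptstyle(5)},g_{\scriptscriptstyle(4)})(h_{\scriptscriptstyle(6)}\la b')a'.\]
Multiplying the factors of $\Delta X$ and $\Delta Y$ instead produces extra terms $\gamma^{-1}(h,g)\ot_B\gamma(h,g)$ in the middle. These cancel because the middle legs are balanced over $B$ and $\gamma\star\gamma^{-1}=\varepsilon$. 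The Takeuchi condition $\int^a$ is checked on generators using the measuring property. The counit condition (ii) is a short computation using $\varepsilon(h\la b)=\varepsilon(h)$-type identities. Note that $\varepsilon(X\,\varepsilon(Y))$ means $\varepsilon$ applied to $X\,s(\varepsilon(Y))$, which again uses $\gamma\star\gamma^{-1}=\varepsilon$.

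\textbf{The 3-cocycle $\Phi^\gamma$.} We first check that $\Phi^\gamma$ is $B^e$-linear and balanced, and satisfies condition (2); this is read off from where $a,a',b,b',c,c'$ enter the formula. Normalisation (1) uses $\phi(h,1,g)=\varepsilon(hg)$ together with unitality of $\gamma$. For the other two slots we use the Remark, so it suffices to prove (1) in the middle slot, plus (3).

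The convolution inverse of $\Phi^\gamma$ is written explicitly: invert the middle $H$-part by replacing $\gamma^{\pm1}$ with $\gamma^{\mp1}$, reverse the order, use $\phi^{-1}$, and keep the outer $B$-factors. One then checks that $\Phi^\gamma\star(\Phi^\gamma)^{-1}=\varepsilon$, telescoping with $\gamma\star\gamma^{-1}=\varepsilon$ and the measuring property.

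Condition (4) is verified as follows. Compute $(XY)Z$ and $X(YZ)$ in $B\ot H\ot B$. The left $B$-leg of $X(YZ)$ contains $h\la(g\la c)$ composed with $h\la\gamma(g,f)$. The twisted action condition rewrites $h\la(g\la c)$ as $\gamma\,((hg)\la c)\,\gamma^{-1}$, which brings it to the form of $(XY)Z$. The middle legs $h(gf)$ and $(hg)f$ are then related by $\phi$ via axiom (3) of $H$. This leaves exactly a left factor $\Phi^\gamma(X\o,Y\o,Z\o)$ acting via $s$ and a right factor via $t$, as required.

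\textbf{Main obstacle: the pentagon-type identity (3).} We expect (3) to be the hardest step. To organise it, we would observe that $\Phi^\gamma$ factorises as $\Phi^\gamma=\Phi_0\star\widehat{\phi}\star\Phi_0'$. Here $\Phi_0$ and $\Phi_0'$ are the left and right "$\gamma$-coboundary" pieces, and $\widehat\phi$ is $\phi$ inserted in the middle.

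We would then mimic the proof of Theorem~\ref{twistprod1}. Formally, $\cL$ behaves like a twist of the strict-$\gamma$ case by a "$2$-cochain" built from $\gamma$. We would therefore reduce (3) to the 3-cocycle condition (2) of $\phi$ together with a $B$-valued identity expressing the associativity of $\gamma$ twisted by $\phi$. That identity should follow from applying the twisted action condition twice, to $h\la(g\la(f\la b))$ bracketed two ways, and comparing.

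Since Theorem~\ref{twistprod1} requires an honest coquasi-bialgebroid to twist, this reduction is heuristic. In practice we would expand both sides directly on $X,Y,Z,W$, collect the outer $B$-factors, which agree by the measuring property, and cancel $\gamma\gamma^{-1}$ pairs. The bookkeeping of the $\la$-actions on $\gamma$-factors is where the difficulty lies.
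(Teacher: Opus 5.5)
Your overall route is the paper's: verify each axiom by direct computation, obtain (4) from the twisted action together with axiom (3) of $H$, and obtain (3) by expanding both sides and reducing to the pentagon identity (axiom (2)) for $\phi$.

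The concrete gap is your claim that condition (2), $\Phi^\gamma(X,Y,Z\,d)=\Phi^\gamma(X,Y,Z\,\overline{d})$, can be read off from the formula. We have $Z\,d=c(f\o\la d)\ot f\t\ot c'$ and $Z\,\overline{d}=c\ot f\o\ot (f\t\la d)c'$. So the factor $h\la(g\la(f\la d))$ lands immediately to the left of the $B$-valued core
\[\Psi(h,g,f)=(h\la\gamma(g,f))\,\gamma(h,gf)\,\phi(h,g,f)\,\gamma^{-1}(hg,f)\,\gamma^{-1}(h,g)\]
(coproduct legs suppressed) in the first case, and immediately to its right in the second. Since $B$ is noncommutative, these are not visibly equal. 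What you need is the intertwining identity
\[\bigl(h\o\la(g\o\la(f\o\la d))\bigr)\,\Psi(h\t,g\t,f\t)=\Psi(h\o,g\o,f\o)\,\bigl(h\t\la(g\t\la(f\t\la d))\bigr).\]
You prove it by pushing $d$ through the core:
\begin{itemize}
\item apply the twisted action for $(g,f)$ inside $h\la$;
\item then apply it for $(h,gf)$;
\item use axiom (3) of $H$ to replace $h(gf)$ by $(hg)f$ across the scalar $\phi(h,g,f)$;
\item finally apply the twisted action in reverse for $(hg,f)$ and $(h,g)$.
\end{itemize}
This is exactly the paper's computation of (2). It is also what your heuristic for (3), comparing the two bracketings of $h\la(g\la(f\la b))$, actually yields. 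It is not an associativity or cocycle law for $\gamma$: none is assumed, none follows, and none is needed, since the defect of $\gamma$ is absorbed into $\Phi^\gamma$.

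The same transport is what your plan for (3) lacks when you assert that the outer $B$-factors agree by the measuring property. For $a,b,c,d,b',a'$ they do, but $c'$ and $d'$ do not.
\begin{itemize}
\item On the left-hand side they first appear as $((hg)\la(f\la d'))((hg)\la c')$, to the left of $\gamma^{-1}(h,g)$.
\item On the right-hand side $d'$ appears as $h\la((gf)\la d')$, sandwiched between $\gamma^{-1}(h,gf)$ and $\gamma(h,gf)\phi(h,g,f)\gamma^{-1}(hg,f)\gamma^{-1}(h,g)$.
\end{itemize}
Bringing both to $(h\la(g\la(f\la d')))(h\la(g\la c'))$ at the far right needs the twisted action and axiom (3) of $H$, as above.

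Two further steps in (3) also go beyond cancelling $\gamma\gamma^{-1}$ pairs. The middle cancellation on the left-hand side,
\[\gamma^{-1}(h,g)\,\bigl(h\la(g\la\gamma^{-1}(f,l))\bigr)\,\gamma(h,g)\,\bigl((hg)\la\gamma(f,l)\bigr)=\varepsilon,\]
requires the twisted action before $\gamma^{-1}\star\gamma=\varepsilon$ can be applied. In addition, the scalars $\phi(g,f,l)$ and $\phi(h,g,f)$ must be moved across $\gamma(h,(gf)l)$ and $\gamma^{-1}(h(gf),l)$ using axiom (3) of $H$. Only after these steps do the two sides differ solely by $\phi(h,g,fl)\phi(hg,f,l)$ versus $\phi(g,f,l)\phi(h,gf,l)\phi(h,g,f)$, which agree by axiom (2) of $H$.

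A smaller point concerns normalisation. The Remark gives only the third-slot normalisation, so do not rely on it for the first slot. All three slots are in any case immediate by direct substitution, using that $\phi$ is normalised in every slot (a standard consequence of the coquasi-bialgebra axioms).
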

\begin{proof}
    The \(B\)-coalgebroid structure on \(B\otimes H\otimes B\) is the same as that of the usual Connes--Moscovici bialgebroid. The coproduct is an algebra map; indeed,
    \begin{align*}
        \Delta(XY)=&a(h\o \la b)\gamma(h_{\scriptscriptstyle{(2)}}, g\o )\ot h_{\scriptscriptstyle{(3)}}g_{\scriptscriptstyle{(2)}}\ot 1\ot_{B} 1\ot h_{\scriptscriptstyle{(4)}}g_{\scriptscriptstyle{(3)}} \ot\gamma^{-1}(h_{\scriptscriptstyle{(5)}}, g_{\scriptscriptstyle{(4)}})(h_{\scriptscriptstyle{(6)}}\la b')a'\\
        =&a(h\o \la b)\gamma(h_{\scriptscriptstyle{(2)}}, g\o )\ot h_{\scriptscriptstyle{(3)}}g_{\scriptscriptstyle{(2)}}\ot \gamma^{-1}(h_{\scriptscriptstyle{(4)}}, g_{\scriptscriptstyle{(3)}})\ot_{B} \gamma(h_{\scriptscriptstyle{(5)}}, g_{\scriptscriptstyle{(4)}})\ot h_{\scriptscriptstyle{(6)}}g_{\scriptscriptstyle{(5)}}\ot \gamma^{-1}(h_{\scriptscriptstyle{(7)}}, g_{\scriptscriptstyle{(6)}})(h_{\scriptscriptstyle{(8)}}\la b')a'\\
        =&\Delta(X)\Delta(Y).
    \end{align*}
We can also see
\begin{align*}
    X\o \,\overline{b}\ot_{B}X_{\scriptscriptstyle{(2)}}=&a\ot h\o \ot h_{\scriptscriptstyle{(2)}}\la b\ot_{B} 1\ot h_{\scriptscriptstyle{(3)}}\ot a'\\
    =&a\ot h\o \ot 1 \ot_{B} h_{\scriptscriptstyle{(2)}}\la b\ot h_{\scriptscriptstyle{(3)}}\ot a'\\
=&X\o \ot_{B}X_{\scriptscriptstyle{(2)}}\,b.
\end{align*}
For the counit, we have
\begin{align*}    \varepsilon(XY)=a(h\la(bb'))a'\varepsilon(g)=\varepsilon(X\,\varepsilon(Y))=\varepsilon(X \overline{\varepsilon(Y)}).
\end{align*}
It is straightforward to verify that \(\Phi^\gamma\) is \(B\)-bilinear, well defined over \(\otimes_{B^e}\), and unital. Moreover,
\begin{align*}
    \Phi^{\gamma}(X, Y, Z\,d)=&a(h\o \la b)(h_{\scriptscriptstyle{(2)}}\la(g\o \la c))(h_{\scriptscriptstyle{(3)}}\la(g_{\scriptscriptstyle{(2)}}\la(f\o \la d)))(h_{\scriptscriptstyle{(4)}}\la\gamma(g_{\scriptscriptstyle{(3)}}, f_{\scriptscriptstyle{(2)}}))\gamma(h_{\scriptscriptstyle{(5)}}, g_{\scriptscriptstyle{(4)}}f_{\scriptscriptstyle{(3)}})\\
    &\quad \phi(h_{\scriptscriptstyle{(6)}}, g_{\scriptscriptstyle{(5)}}, f_{\scriptscriptstyle{(4)}})\gamma^{-1}(h_{\scriptscriptstyle{(7)}}g_{\scriptscriptstyle{(6)}}, f_{\scriptscriptstyle{(5)}})\gamma^{-1}(h_{\scriptscriptstyle{(8)}}, g_{\scriptscriptstyle{(7)}})(h_{\scriptscriptstyle{(9)}}\la(g_{\scriptscriptstyle{(8)}}\la c'))(h_{\scriptscriptstyle{(10)}}\la b')a'\\
    =&a(h\o \la b)(h_{\scriptscriptstyle{(2)}}\la(g\o \la c))(h_{\scriptscriptstyle{(3)}}\la\gamma(g_{\scriptscriptstyle{(2)}}, f\o ))\gamma(h_{\scriptscriptstyle{(4)}}, g_{\scriptscriptstyle{(3)}}f_{\scriptscriptstyle{(2)}})((h_{\scriptscriptstyle{(5)}}(g_{\scriptscriptstyle{(4)}}f_{\scriptscriptstyle{(3)}}))\la d)\\
    &\quad \phi(h_{\scriptscriptstyle{(6)}}, g_{\scriptscriptstyle{(5)}}, f_{\scriptscriptstyle{(4)}})\gamma^{-1}(h_{\scriptscriptstyle{(7)}}g_{\scriptscriptstyle{(6)}}, f_{\scriptscriptstyle{(5)}})\gamma^{-1}(h_{\scriptscriptstyle{(8)}}, g_{\scriptscriptstyle{(7)}})(h_{\scriptscriptstyle{(9)}}\la(g_{\scriptscriptstyle{(8)}}\la c'))(h_{\scriptscriptstyle{(10)}}\la b')a'\\
    =&a(h\o \la b)(h_{\scriptscriptstyle{(2)}}\la(g\o \la c))(h_{\scriptscriptstyle{(3)}}\la\gamma(g_{\scriptscriptstyle{(2)}}, f\o ))\gamma(h_{\scriptscriptstyle{(4)}}, g_{\scriptscriptstyle{(3)}}f_{\scriptscriptstyle{(2)}})\phi(h_{\scriptscriptstyle{(5)}}, g_{\scriptscriptstyle{(4)}}, f_{\scriptscriptstyle{(3)}})\\
    &\quad (((h_{\scriptscriptstyle{(6)}}g_{\scriptscriptstyle{(5)}})f_{\scriptscriptstyle{(4)}})\la d) \gamma^{-1}(h_{\scriptscriptstyle{(7)}}g_{\scriptscriptstyle{(6)}}, f_{\scriptscriptstyle{(5)}})\gamma^{-1}(h_{\scriptscriptstyle{(8)}}, g_{\scriptscriptstyle{(7)}})(h_{\scriptscriptstyle{(9)}}\la(g_{\scriptscriptstyle{(8)}}\la c'))(h_{\scriptscriptstyle{(10)}}\la b')a'\\
    =&a(h\o \la b)(h_{\scriptscriptstyle{(2)}}\la(g\o \la c))(h_{\scriptscriptstyle{(3)}}\la\gamma(g_{\scriptscriptstyle{(2)}}, f\o ))\gamma(h_{\scriptscriptstyle{(4)}}, g_{\scriptscriptstyle{(3)}}f_{\scriptscriptstyle{(2)}})\phi(h_{\scriptscriptstyle{(5)}}, g_{\scriptscriptstyle{(4)}}, f_{\scriptscriptstyle{(3)}})\\
    &\quad \gamma^{-1}(h_{\scriptscriptstyle{(6)}}g_{\scriptscriptstyle{(5)}}, f_{\scriptscriptstyle{(4)}})\gamma^{-1}(h_{\scriptscriptstyle{(7)}}, g_{\scriptscriptstyle{(6)}})(h_{\scriptscriptstyle{(8)}}\la(g_{\scriptscriptstyle{(7)}}\la(f_{\scriptscriptstyle{(5)}}\la d)))(h_{\scriptscriptstyle{(9)}}\la(g_{\scriptscriptstyle{(8)}}\la c'))(h_{\scriptscriptstyle{(10)}}\la b')a'\\
    =&\Phi^{\gamma}(X, Y, Z\,\overline{d})
\end{align*}
    We can also see the convolution inverse of $\Phi^{\gamma}$ is
    \begin{align*}
        (\Phi^{\gamma})^{-1}(X&, Y, Z)=a(h\o \la b)(h_{\scriptscriptstyle{(2)}}\la(g\o \la c))\gamma(h_{\scriptscriptstyle{(3)}}, g_{\scriptscriptstyle{(2)}})\gamma(h_{\scriptscriptstyle{(4)}}g_{\scriptscriptstyle{(3)}}, f\o )\\
    &\phi^{-1}(h_{\scriptscriptstyle{(5)}}, g_{\scriptscriptstyle{(4)}}, f_{\scriptscriptstyle{(2)}})\gamma^{-1}(h_{\scriptscriptstyle{(6)}}, g_{\scriptscriptstyle{(5)}}f_{\scriptscriptstyle{(3)}})h_{\scriptscriptstyle{(7)}}\la\gamma^{-1}(g_{\scriptscriptstyle{(6)}}, f_{\scriptscriptstyle{(4)}})(h_{\scriptscriptstyle{(8)}}\la(g_{\scriptscriptstyle{(7)}}\la c'))(h_{\scriptscriptstyle{(9)}}\la b')a',
    \end{align*}
and
\begin{align*}
    \Phi^{\gamma}(X\o &, Y\o , Z\o )(X_{\scriptscriptstyle{(2)}}Y_{\scriptscriptstyle{(2)}})Z_{\scriptscriptstyle{(2)}}\\
    =&a(h\o \la b)(h_{\scriptscriptstyle{(2)}}\la(g\o \la c))(h_{\scriptscriptstyle{(3)}}\la\gamma(g_{\scriptscriptstyle{(2)}}, f\o ))\gamma(h_{\scriptscriptstyle{(4)}}, g_{\scriptscriptstyle{(3)}}f_{\scriptscriptstyle{(2)}})\phi(h_{\scriptscriptstyle{(5)}}, g_{\scriptscriptstyle{(4)}}, f_{\scriptscriptstyle{(3)}})\gamma^{-1}(h_{\scriptscriptstyle{(6)}}g_{\scriptscriptstyle{(5)}}, f_{\scriptscriptstyle{(4)}})\\
    &\quad\gamma^{-1}(h_{\scriptscriptstyle{(7)}}, g_{\scriptscriptstyle{(6)}})\gamma(h_{\scriptscriptstyle{(8)}}, g_{\scriptscriptstyle{(7)}})\gamma(h_{\scriptscriptstyle{(9)}}g_{\scriptscriptstyle{(8)}}, f_{\scriptscriptstyle{(5)}})\ot (h_{\scriptscriptstyle{(10)}}g_{\scriptscriptstyle{(9)}})f_{\scriptscriptstyle{(6)}}\ot\\
    &\quad \gamma^{-1}(h_{\scriptscriptstyle{(11)}}g_{\scriptscriptstyle{(10)}}, f_{\scriptscriptstyle{(7)}})\gamma^{-1}(h_{\scriptscriptstyle{(12)}}, g_{\scriptscriptstyle{(11)}})(h_{\scriptscriptstyle{(13)}}\la(g_{\scriptscriptstyle{(12)}}\la c'))(h_{\scriptscriptstyle{(14)}}\la b')a'\\
    =&a(h\o \la b)(h_{\scriptscriptstyle{(2)}}\la(g\o \la c))(h_{\scriptscriptstyle{(3)}}\la\gamma(g_{\scriptscriptstyle{(2)}}, f\o ))\gamma(h_{\scriptscriptstyle{(4)}}, g_{\scriptscriptstyle{(3)}}f_{\scriptscriptstyle{(2)}})\phi(h_{\scriptscriptstyle{(5)}}, g_{\scriptscriptstyle{(4)}}, f_{\scriptscriptstyle{(3)}})\\
    &\quad\ot (h_{\scriptscriptstyle{(6)}}g_{\scriptscriptstyle{(5)}})f_{\scriptscriptstyle{(4)}}\ot\gamma^{-1}(h_{\scriptscriptstyle{(7)}}g_{\scriptscriptstyle{(6)}}, f_{\scriptscriptstyle{(5)}})\gamma^{-1}(h_{\scriptscriptstyle{(8)}}, g_{\scriptscriptstyle{(7)}})(h_{\scriptscriptstyle{(9)}}\la(g_{\scriptscriptstyle{(8)}}\la c'))(h_{\scriptscriptstyle{(10)}}\la b')a'\\
    =&a(h\o \la b)(h_{\scriptscriptstyle{(2)}}\la(g\o \la c))(h_{\scriptscriptstyle{(3)}}\la\gamma(g_{\scriptscriptstyle{(2)}}, f\o ))\gamma(h_{\scriptscriptstyle{(4)}}, g_{\scriptscriptstyle{(3)}}f_{\scriptscriptstyle{(2)}})\ot h_{\scriptscriptstyle{(5)}}(g_{\scriptscriptstyle{(4)}}f_{\scriptscriptstyle{(3)}})\ot\\\
    &\quad\phi(h_{\scriptscriptstyle{(6)}}, g_{\scriptscriptstyle{(5)}}, f_{\scriptscriptstyle{(4)}})\gamma^{-1}(h_{\scriptscriptstyle{(7)}}g_{\scriptscriptstyle{(6)}}, f_{\scriptscriptstyle{(5)}})\gamma^{-1}(h_{\scriptscriptstyle{(8)}}, g_{\scriptscriptstyle{(7)}})(h_{\scriptscriptstyle{(9)}}\la(g_{\scriptscriptstyle{(8)}}\la c'))(h_{\scriptscriptstyle{(10)}}\la b')a'\\
    =&\overline{\Phi^{\gamma}(X_{\scriptscriptstyle{(2)}}, Y_{\scriptscriptstyle{(2)}}, Z_{\scriptscriptstyle{(2)}})}\,X\o (Y\o Z\o ).
\end{align*}
Finally, let $W=d\ot l\ot d'$, we can see, on the one hand
\begin{align*}
     \Phi^{\gamma}(X\o &, Y\o , Z\o W\o )\Phi^{\gamma}(X_{\scriptscriptstyle{(2)}}Y_{\scriptscriptstyle{(2)}}, Z_{\scriptscriptstyle{(2)}}, W_{\scriptscriptstyle{(2)}})\\
     =&a(h\o \la b)(h_{\scriptscriptstyle{(2)}}\la(g\o \la c))(h_{\scriptscriptstyle{(3)}}\la(g_{\scriptscriptstyle{(2)}}\la(f\o \la d)))(h_{\scriptscriptstyle{(4)}}\la(g_{\scriptscriptstyle{(3)}}\la \gamma(f_{\scriptscriptstyle{(2)}}, l\o )))(h_{\scriptscriptstyle{(5)}}\la\gamma(g_{\scriptscriptstyle{(4)}}, f_{\scriptscriptstyle{(3)}}l_{\scriptscriptstyle{(2)}}))\\
     &\quad\gamma(h_{\scriptscriptstyle{(6)}}, g_{\scriptscriptstyle{(5)}}(f_{\scriptscriptstyle{(4)}}l_{\scriptscriptstyle{(3)}}))\phi(h_{\scriptscriptstyle{(7)}}, g_{\scriptscriptstyle{(6)}}, f_{\scriptscriptstyle{(5)}}l_{\scriptscriptstyle{(4)}})\gamma^{-1}(h_{\scriptscriptstyle{(8)}}g_{\scriptscriptstyle{(7)}}, f_{\scriptscriptstyle{(6)}}l_{\scriptscriptstyle{(5)}})\gamma^{-1}(h_{\scriptscriptstyle{(9)}}, g_{\scriptscriptstyle{(8)}})(h_{\scriptscriptstyle{(10)}}\la(g_{\scriptscriptstyle{(9)}}\la\gamma^{-1}(f_{\scriptscriptstyle{(7)}}, l_{\scriptscriptstyle{(6)}})))\\
     &\quad\gamma(h_{\scriptscriptstyle{(11)}}, g_{\scriptscriptstyle{(10)}})((h_{\scriptscriptstyle{(12)}}g_{\scriptscriptstyle{(11)}})\la\gamma(f_{\scriptscriptstyle{(8)}}, l_{\scriptscriptstyle{(7)}}))\gamma(h_{\scriptscriptstyle{(13)}}g_{\scriptscriptstyle{(12)}}, f_{\scriptscriptstyle{(9)}}l_{\scriptscriptstyle{(8)}})\phi(h_{\scriptscriptstyle{(14)}}g_{\scriptscriptstyle{(13)}}, f_{\scriptscriptstyle{(10)}}, l_{\scriptscriptstyle{(9)}})\gamma^{-1}((h_{\scriptscriptstyle{(15)}}g_{\scriptscriptstyle{(14)}})f_{\scriptscriptstyle{(11)}}, l_{\scriptscriptstyle{(10)}})\\
     &\quad \gamma^{-1}(h_{\scriptscriptstyle{(16)}}g_{\scriptscriptstyle{(15)}}, f_{\scriptscriptstyle{(12)}})((h_{\scriptscriptstyle{(17)}}g_{\scriptscriptstyle{(16)}})\la(f_{\scriptscriptstyle{(13)}}\la d'))((h_{\scriptscriptstyle{(18)}}g_{\scriptscriptstyle{(17)}})\la c')\gamma^{-1}(h_{\scriptscriptstyle{(19)}}, g_{\scriptscriptstyle{(18)}})(h_{\scriptscriptstyle{(20)}}\la b')a'\\
     =&a(h\o \la b)(h_{\scriptscriptstyle{(2)}}\la(g\o \la c))(h_{\scriptscriptstyle{(3)}}\la(g_{\scriptscriptstyle{(2)}}\la(f\o \la d)))(h_{\scriptscriptstyle{(4)}}\la(g_{\scriptscriptstyle{(3)}}\la \gamma(f_{\scriptscriptstyle{(2)}}, l\o )))(h_{\scriptscriptstyle{(5)}}\la\gamma(g_{\scriptscriptstyle{(4)}}, f_{\scriptscriptstyle{(3)}}l_{\scriptscriptstyle{(2)}}))\\
     &\quad\gamma(h_{\scriptscriptstyle{(6)}}, g_{\scriptscriptstyle{(5)}}(f_{\scriptscriptstyle{(4)}}l_{\scriptscriptstyle{(3)}}))\phi(h_{\scriptscriptstyle{(7)}}, g_{\scriptscriptstyle{(6)}}, f_{\scriptscriptstyle{(5)}}l_{\scriptscriptstyle{(4)}})\phi(h_{\scriptscriptstyle{(8)}}g_{\scriptscriptstyle{(7)}}, f_{\scriptscriptstyle{(6)}}, l_{\scriptscriptstyle{(5)}})\gamma^{-1}((h_{\scriptscriptstyle{(9)}}g_{\scriptscriptstyle{(8)}})f_{\scriptscriptstyle{(7)}}, l_{\scriptscriptstyle{(6)}})\\
     &\quad \gamma^{-1}(h_{\scriptscriptstyle{(10)}}g_{\scriptscriptstyle{(9)}}, f_{\scriptscriptstyle{(8)}})((h_{\scriptscriptstyle{(11)}}g_{\scriptscriptstyle{(10)}})\la(f_{\scriptscriptstyle{(9)}}\la d'))((h_{\scriptscriptstyle{(12)}}g_{\scriptscriptstyle{(11)}})\la c')\gamma^{-1}(h_{\scriptscriptstyle{(13)}}, g_{\scriptscriptstyle{(12)}})(h_{\scriptscriptstyle{(14)}}\la b')a'\\
     =&a(h\o \la b)(h_{\scriptscriptstyle{(2)}}\la(g\o \la c))(h_{\scriptscriptstyle{(3)}}\la(g_{\scriptscriptstyle{(2)}}\la(f\o \la d)))(h_{\scriptscriptstyle{(4)}}\la(g_{\scriptscriptstyle{(3)}}\la \gamma(f_{\scriptscriptstyle{(2)}}, l\o )))(h_{\scriptscriptstyle{(5)}}\la\gamma(g_{\scriptscriptstyle{(4)}}, f_{\scriptscriptstyle{(3)}}l_{\scriptscriptstyle{(2)}}))\\
     &\quad\gamma(h_{\scriptscriptstyle{(6)}}, g_{\scriptscriptstyle{(5)}}(f_{\scriptscriptstyle{(4)}}l_{\scriptscriptstyle{(3)}}))\phi(h_{\scriptscriptstyle{(7)}}, g_{\scriptscriptstyle{(6)}}, f_{\scriptscriptstyle{(5)}}l_{\scriptscriptstyle{(4)}})\phi(h_{\scriptscriptstyle{(8)}}g_{\scriptscriptstyle{(7)}}, f_{\scriptscriptstyle{(6)}}, l_{\scriptscriptstyle{(5)}})\gamma^{-1}((h_{\scriptscriptstyle{(9)}}g_{\scriptscriptstyle{(8)}})f_{\scriptscriptstyle{(7)}}, l_{\scriptscriptstyle{(6)}})\\
     &\quad \gamma^{-1}(h_{\scriptscriptstyle{(10)}}g_{\scriptscriptstyle{(9)}}, f_{\scriptscriptstyle{(8)}})\gamma^{-1}(h_{\scriptscriptstyle{(11)}}, g_{\scriptscriptstyle{(10)}})(h_{\scriptscriptstyle{(12)}}\la(g_{\scriptscriptstyle{(11)}}\la(f_{\scriptscriptstyle{(9)}}\la d')))(h_{\scriptscriptstyle{(13)}}\la(g_{\scriptscriptstyle{(12)}}\la c'))(h_{\scriptscriptstyle{(14)}}\la b')a'.
\end{align*}
On the other hand,
\begin{align*}
    \Phi^{\gamma}(X\o &, s(\Phi^{\gamma}(Y\o , Z\o , W\o ))Y_{\scriptscriptstyle{(2)}}Z_{\scriptscriptstyle{(2)}}, W_{\scriptscriptstyle{(2)}})\Phi^{\gamma}(X_{\scriptscriptstyle{(2)}}, Y_{\scriptscriptstyle{(3)}}, Z_{\scriptscriptstyle{(3)}})\\    =&a(h\o \la b)(h_{\scriptscriptstyle{(2)}}\la(g\o \la c))(h_{\scriptscriptstyle{(3)}}\la(g_{\scriptscriptstyle{(2)}}\la(f\o \la d)))(h_{\scriptscriptstyle{(4)}}\la(g_{\scriptscriptstyle{(3)}}\la\gamma(f_{\scriptscriptstyle{(2)}}, l\o )))(h_{\scriptscriptstyle{(5)}}\la\gamma(g_{\scriptscriptstyle{(4)}}, f_{\scriptscriptstyle{(3)}}l_{\scriptscriptstyle{(2)}}))\\
    &\quad(h_{\scriptscriptstyle{(6)}}\la\phi(g_{\scriptscriptstyle{(5)}}, f_{\scriptscriptstyle{(4)}}, l_{\scriptscriptstyle{(3)}}))(h_{\scriptscriptstyle{(7)}}\la\gamma^{-1}(g_{\scriptscriptstyle{(6)}}f_{\scriptscriptstyle{(5)}}, l_{\scriptscriptstyle{(4)}})) (h_{\scriptscriptstyle{(8)}}\la\gamma^{-1}(g_{\scriptscriptstyle{(7)}}, f_{\scriptscriptstyle{(6)}}))(h_{\scriptscriptstyle{(9)}}\la \gamma(g_{\scriptscriptstyle{(8)}}, f_{\scriptscriptstyle{(7)}}))\\
    &\quad(h_{\scriptscriptstyle{(10)}}\la\gamma(g_{\scriptscriptstyle{(9)}}f_{\scriptscriptstyle{(8)}}, l_{\scriptscriptstyle{(5)}}))\gamma(h_{\scriptscriptstyle{(11)}}, (g_{\scriptscriptstyle{(10)}}f_{\scriptscriptstyle{(9)}})l_{\scriptscriptstyle{(6)}})\phi(h_{\scriptscriptstyle{(12)}}, g_{\scriptscriptstyle{(11)}}f_{\scriptscriptstyle{(10)}}, l_{\scriptscriptstyle{(7)}})\gamma^{-1}(h_{\scriptscriptstyle{(13)}}(g_{\scriptscriptstyle{(12)}}f_{\scriptscriptstyle{(11)}}), l_{\scriptscriptstyle{(8)}})\\
    &\quad\gamma^{-1}(h_{\scriptscriptstyle{(14)}}, g_{\scriptscriptstyle{(13)}}f_{\scriptscriptstyle{(12)}}) (h_{\scriptscriptstyle{(15)}}\la((g_{\scriptscriptstyle{(14)}}f_{\scriptscriptstyle{(13)}})\la d'))(h_{\scriptscriptstyle{(16)}}\la \gamma^{-1}(g_{\scriptscriptstyle{(15)}}, f_{\scriptscriptstyle{(14)}}))(h_{\scriptscriptstyle{(17)}}\la\gamma(g_{\scriptscriptstyle{(16)}}, f_{\scriptscriptstyle{(15)}}))\\
    &\quad\gamma(h_{\scriptscriptstyle{(18)}}, g_{\scriptscriptstyle{(17)}}f_{\scriptscriptstyle{(16)}})\phi(h_{\scriptscriptstyle{(19)}}, g_{\scriptscriptstyle{(18)}}, f_{\scriptscriptstyle{(17)}})\gamma^{-1}(h_{\scriptscriptstyle{(20)}}g_{\scriptscriptstyle{(19)}}, f_{\scriptscriptstyle{(18)}})\gamma^{-1}(h_{\scriptscriptstyle{(21)}}, g_{\scriptscriptstyle{(20)}})(h_{\scriptscriptstyle{(22)}}\la(g_{\scriptscriptstyle{(21)}}\la c'))(h_{\scriptscriptstyle{(23)}}\la b')a'\\
     =&a(h\o \la b)(h_{\scriptscriptstyle{(2)}}\la(g\o \la c))(h_{\scriptscriptstyle{(3)}}\la(g_{\scriptscriptstyle{(2)}}\la(f\o \la d)))(h_{\scriptscriptstyle{(4)}}\la(g_{\scriptscriptstyle{(3)}}\la\gamma(f_{\scriptscriptstyle{(2)}}, l\o )))(h_{\scriptscriptstyle{(5)}}\la\gamma(g_{\scriptscriptstyle{(4)}}, f_{\scriptscriptstyle{(3)}}l_{\scriptscriptstyle{(2)}}))\\
    &\quad(h_{\scriptscriptstyle{(6)}}\la\phi(g_{\scriptscriptstyle{(5)}}, f_{\scriptscriptstyle{(4)}}, l_{\scriptscriptstyle{(3)}}))\gamma(h_{\scriptscriptstyle{(7)}}, (g_{\scriptscriptstyle{(6)}}f_{\scriptscriptstyle{(5)}})l_{\scriptscriptstyle{(4)}})\phi(h_{\scriptscriptstyle{(8)}}, g_{\scriptscriptstyle{(7)}}f_{\scriptscriptstyle{(6)}}, l_{\scriptscriptstyle{(5)}})\gamma^{-1}(h_{\scriptscriptstyle{(9)}}(g_{\scriptscriptstyle{(8)}}f_{\scriptscriptstyle{(7)}}), l_{\scriptscriptstyle{(6)}})\\
    &\quad\gamma^{-1}(h_{\scriptscriptstyle{(10)}}, g_{\scriptscriptstyle{(9)}}f_{\scriptscriptstyle{(8)}})(h_{\scriptscriptstyle{(11)}}\la((g_{\scriptscriptstyle{(10)}}f_{\scriptscriptstyle{(9)}})\la d'))\gamma(h_{\scriptscriptstyle{(12)}}, g_{\scriptscriptstyle{(11)}}f_{\scriptscriptstyle{(10)}})\phi(h_{\scriptscriptstyle{(13)}}, g_{\scriptscriptstyle{(12)}}, f_{\scriptscriptstyle{(11)}})\gamma^{-1}(h_{\scriptscriptstyle{(14)}}g_{\scriptscriptstyle{(13)}}, f_{\scriptscriptstyle{(12)}})\\
    &\quad\gamma^{-1}(h_{\scriptscriptstyle{(15)}}, g_{\scriptscriptstyle{(14)}})(h_{\scriptscriptstyle{(16)}}\la(g_{\scriptscriptstyle{(15)}}\la c'))(h_{\scriptscriptstyle{(17)}}\la b')a'\\
     =&a(h\o \la b)(h_{\scriptscriptstyle{(2)}}\la(g\o \la c))(h_{\scriptscriptstyle{(3)}}\la(g_{\scriptscriptstyle{(2)}}\la(f\o \la d)))(h_{\scriptscriptstyle{(4)}}\la(g_{\scriptscriptstyle{(3)}}\la\gamma(f_{\scriptscriptstyle{(2)}}, l\o )))(h_{\scriptscriptstyle{(5)}}\la\gamma(g_{\scriptscriptstyle{(4)}}, f_{\scriptscriptstyle{(3)}}l_{\scriptscriptstyle{(2)}}))\\
    &\quad(h_{\scriptscriptstyle{(6)}}\la\phi(g_{\scriptscriptstyle{(5)}}, f_{\scriptscriptstyle{(4)}}, l_{\scriptscriptstyle{(3)}}))\gamma(h_{\scriptscriptstyle{(7)}}, (g_{\scriptscriptstyle{(6)}}f_{\scriptscriptstyle{(5)}})l_{\scriptscriptstyle{(4)}})\phi(h_{\scriptscriptstyle{(8)}}, g_{\scriptscriptstyle{(7)}}f_{\scriptscriptstyle{(6)}}, l_{\scriptscriptstyle{(5)}})\gamma^{-1}(h_{\scriptscriptstyle{(9)}}(g_{\scriptscriptstyle{(8)}}f_{\scriptscriptstyle{(7)}}), l_{\scriptscriptstyle{(6)}})\\
    &\quad\gamma^{-1}(h_{\scriptscriptstyle{(10)}}, g_{\scriptscriptstyle{(9)}}f_{\scriptscriptstyle{(8)}})\gamma(h_{\scriptscriptstyle{(11)}}, g_{\scriptscriptstyle{(10)}}f_{\scriptscriptstyle{(9)}})\phi(h_{\scriptscriptstyle{(12)}}, g_{\scriptscriptstyle{(11)}}, f_{\scriptscriptstyle{(10)}})\gamma^{-1}(h_{\scriptscriptstyle{(13)}}g_{\scriptscriptstyle{(12)}}, f_{\scriptscriptstyle{(11)}})\\
    &\quad\gamma^{-1}(h_{\scriptscriptstyle{(14)}}, g_{\scriptscriptstyle{(13)}})(h_{\scriptscriptstyle{(15)}}\la(g_{\scriptscriptstyle{(14)}}\la(f_{\scriptscriptstyle{(12)}}\la d')))(h_{\scriptscriptstyle{(16)}}\la(g_{\scriptscriptstyle{(15)}}\la c'))(h_{\scriptscriptstyle{(17)}}\la b')a'\\
     =&a(h\o \la b)(h_{\scriptscriptstyle{(2)}}\la(g\o \la c))(h_{\scriptscriptstyle{(3)}}\la(g_{\scriptscriptstyle{(2)}}\la(f\o \la d)))(h_{\scriptscriptstyle{(4)}}\la(g_{\scriptscriptstyle{(3)}}\la\gamma(f_{\scriptscriptstyle{(2)}}, l\o )))(h_{\scriptscriptstyle{(5)}}\la\gamma(g_{\scriptscriptstyle{(4)}}, f_{\scriptscriptstyle{(3)}}l_{\scriptscriptstyle{(2)}}))\\
    &\quad(h_{\scriptscriptstyle{(6)}}\la\phi(g_{\scriptscriptstyle{(5)}}, f_{\scriptscriptstyle{(4)}}, l_{\scriptscriptstyle{(3)}}))\gamma(h_{\scriptscriptstyle{(7)}}, (g_{\scriptscriptstyle{(6)}}f_{\scriptscriptstyle{(5)}})l_{\scriptscriptstyle{(4)}})\phi(h_{\scriptscriptstyle{(8)}}, g_{\scriptscriptstyle{(7)}}f_{\scriptscriptstyle{(6)}}, l_{\scriptscriptstyle{(5)}})\phi(h_{\scriptscriptstyle{(9)}}, g_{\scriptscriptstyle{(8)}}, f_{\scriptscriptstyle{(7)}})\\
    &\quad\gamma^{-1}((h_{\scriptscriptstyle{(10)}}g_{\scriptscriptstyle{(9)}})f_{\scriptscriptstyle{(8)}}, l_{\scriptscriptstyle{(6)}})\gamma^{-1}(h_{\scriptscriptstyle{(11)}}g_{\scriptscriptstyle{(10)}}, f_{\scriptscriptstyle{(9)}})\gamma^{-1}(h_{\scriptscriptstyle{(12)}}, g_{\scriptscriptstyle{(11)}})(h_{\scriptscriptstyle{(13)}}\la(g_{\scriptscriptstyle{(12)}}\la(f_{\scriptscriptstyle{(10)}}\la d')))\\
    &\quad(h_{\scriptscriptstyle{(14)}}\la(g_{\scriptscriptstyle{(13)}}\la c'))(h_{\scriptscriptstyle{(15)}}\la b')a'\\
    =&a(h\o \la b)(h_{\scriptscriptstyle{(2)}}\la(g\o \la c))(h_{\scriptscriptstyle{(3)}}\la(g_{\scriptscriptstyle{(2)}}\la(f\o \la d)))(h_{\scriptscriptstyle{(4)}}\la(g_{\scriptscriptstyle{(3)}}\la \gamma(f_{\scriptscriptstyle{(2)}}, l\o )))(h_{\scriptscriptstyle{(5)}}\la\gamma(g_{\scriptscriptstyle{(4)}}, f_{\scriptscriptstyle{(3)}}l_{\scriptscriptstyle{(2)}}))\\
     &\quad\gamma(h_{\scriptscriptstyle{(6)}}, g_{\scriptscriptstyle{(5)}}(f_{\scriptscriptstyle{(4)}}l_{\scriptscriptstyle{(3)}}))\phi(g_{\scriptscriptstyle{(6)}}, f_{\scriptscriptstyle{(5)}}, l_{\scriptscriptstyle{(4)}})\phi(h_{\scriptscriptstyle{(7)}}, g_{\scriptscriptstyle{(7)}}f_{\scriptscriptstyle{(6)}}, l_{\scriptscriptstyle{(5)}})\phi(h_{\scriptscriptstyle{(8)}}, g_{\scriptscriptstyle{(8)}}, f_{\scriptscriptstyle{(7)}})\gamma^{-1}((h_{\scriptscriptstyle{(9)}}g_{\scriptscriptstyle{(9)}})f_{\scriptscriptstyle{(8)}}, l_{\scriptscriptstyle{(6)}})\\
     &\quad \gamma^{-1}(h_{\scriptscriptstyle{(10)}}g_{\scriptscriptstyle{(10)}}, f_{\scriptscriptstyle{(9)}})\gamma^{-1}(h_{\scriptscriptstyle{(11)}}, g_{\scriptscriptstyle{(11)}})(h_{\scriptscriptstyle{(12)}}\la(g_{\scriptscriptstyle{(12)}}\la(f_{\scriptscriptstyle{(10)}}\la d')))(h_{\scriptscriptstyle{(13)}}\la(g_{\scriptscriptstyle{(13)}}\la c'))(h_{\scriptscriptstyle{(14)}}\la b')a'.
\end{align*}
These are equal by axiom (2) of a coquasi-bialgebra $H$. \end{proof}

\begin{exa}\rm An example of Theorem~\ref{thm. coquasi CM bialgebroid} can be obtained from the data of a finite group $X$ with subgroup $G$ and a choice of transversal $M\subseteq X$ such that $X=GM$ is a unique factorisation. Then
\[ st=\tau(s,t) s\cdot t,\quad su=(s\la u)(s\ra u),\quad \forall\ s,t\in M,\ u\in G\]
defines a product $\cdot $ on $M$ making it into a quasigroup, a cocycle $\tau: M\times M\to G$, an action $\ra$ of $G$ on the set of $M$ and a cocycle-action $\la$ of $M$ on the set of $G$, in each case not by automorphisms but by `matched pair' conditions. We use the conventions of \cite{KM} where the conditions are written out in detail. Moreover, it is shown in this work that
there is a bicrossproduct coquasi-Hopf algebra $kM\rlbicross k(G)$
\[ (s\tens\delta_u)(t\tens\delta_v)=\delta_{u, t\la v} s\cdot t \tens\delta_v,\quad \Delta(s\tens \delta_u)=\sum_{ab=u}s\tens\delta_a \tens s\ra a\tens \delta_b\]
\[\eps(s\tens\delta_u)=\delta_{u,e},\quad \phi(s\tens\delta_u\tens t\tens\delta_v\tens r\tens\delta_w)=\delta_{u,\tau^{-1}(t,r)}\delta_{v,e}\delta_{w,e}\]
where $a,b\in G$ and $e$ is the relevant group identity. Our new results is that this acts canonically on $B=k G$, the group algebra of $G$, with
\[ (s\tens\delta_u)\la v=s\la v\delta_{u,v}.\]
One checks that this is a measuring, since
\[ (s\tens \delta_u)\la (vw)=\sum_{ab=u}((s\tens\delta_a)\la v) ((s\ra a\tens \delta_b)\la w)\]
as required. Moreover, we set
\[ \gamma(s\tens \delta_u\tens t\tens\delta_v)=\tau(s,t)\delta_{u,e}\tau(s,t),\quad \gamma^{-1}(s\tens \delta_u\tens t\tens\delta_v)=\tau(s,t)^{-1}\delta_{u,e}\tau(s,t),\]
these maps are convolution inverses of one another. Then with $h=s\tens\delta_u$,
\[h\o\tens h\t\tens h\th=(\Delta\tens\id)\Delta(s\tens\delta_u)=\sum_{abc=u} s\tens\delta_a\tens s\ra a\tens \delta_b\tens s\ra(ab)\tens\delta_c\]
for $a,b,c\in G$, and similarly for $g=t\tens\delta_v$, we have
\begin{align*}\sum_{abc=u, \bar a\bar b\bar c=v}&\gamma(s\tens\delta_a,t\tens\delta_{\bar a})(((s\ra a\tens\delta_b)(t\ra\bar a\tens\delta_{\bar b}))\la w)\gamma^{-1}(s\ra(ab)\tens \delta_c,t\ra(\bar a\bar b)\tens \delta_{\bar c})\\
&= \tau(s,t)((s\tens\delta_u)(t\tens\delta_v)\la w)\tau^{-1}(s\ra u, t\ra v)\\
&=\delta_{u,t\la v}\delta_{v,w}\tau(s,t) ( (s\cdot t)\la w)  \tau^{-1}(s\ra u, t\ra v)\\
&=\delta_{u,t\la v}\delta_{v,w}\tau(s,t) ( (s\cdot t)\la w)  \tau^{-1}(s\ra (t\la w), t\ra w)\\
&=\delta_{u,t\la v}\delta_{v,w} s\la(t\la w)=(s\tens\delta_u)\la ((t\tens \delta_v)\la w)
\end{align*}
as required, where $a=c=\bar a=\bar c=e$ due to the form of $\gamma,\gamma^{-1}$ so that $b=u$ and $\bar b=v$.
\end{exa}

We also record the following special case of Theorem~\ref{thm. coquasi CM bialgebroid}, where \(B\) is a strict left \(H\)-module algebra.

\begin{exa}\rm If \(B\) is a left \(H\)-module algebra for a coquasi-Hopf algebra \((H,\varphi)\), then Theorem~\ref{thm. coquasi CM bialgebroid} implies $(B\ot H\ot B, \Phi)$ is a left $B$-coquasi bialgebroid, with the (nonassociative unital) algebra structure:
    \[(a\ot h\ot a')(b\ot g\ot b')=a(h\o \la b)\ot h_{\scriptscriptstyle{(2)}}g\o \ot (h_{\scriptscriptstyle{(3)}}\la b')a',\]
    for any $h, g\in H$ and $a, a', b, b'\in B$.
    And the `invertible normalised 3-cocycle' $\Phi$
    \begin{align*}
        \Phi(X&, Y, Z)=a(h\o \la b)(h_{\scriptscriptstyle{(2)}}\la(g\o \la c))\phi(h_{\scriptscriptstyle{(3)}}, g_{\scriptscriptstyle{(2)}}, f\o )(h_{\scriptscriptstyle{(4)}}\la(g_{\scriptscriptstyle{(3)}}\la c'))(h_{\scriptscriptstyle{(5)}}\la b')a',
    \end{align*}
  for $X=a\ot h\ot a'$, $Y=b\ot g\ot b'$ and $Z=c\ot f\ot c'$.
\end{exa}
This also gives the following corollary.

\begin{cor}
Let \(B\) be a left \(H\)-module algebra for a coquasi-Hopf algebra \((H,\phi)\). Suppose that \(\gamma:H\otimes H\to B\) is a unital convolution-invertible map of associative type, 
in the sense that
 \[\gamma(\one{h}, \one{g})((\two{h}\two{g})\triangleright b)=((\one{h}\one{g})\triangleright b) \gamma(\two{h}, \two{g}),
\]
    for any $h, g\in H$ and $b\in B$. Then  $(B\ot H\ot B, \Phi^{\gamma})=(B\tens H\tens B,\Phi)^\Gamma$, where $\Gamma\in C^{2}(\cL, B)$ is given by
    \[
    \Gamma(X, Y)=a(h\o \la b)\gamma(h_{\scriptscriptstyle{(2)}}, g )(h_{\scriptscriptstyle{(3)}}\la b')a',
    \]
    for any $X=a\ot h\ot a', Y=b\ot g\ot b'\in B\ot H\ot B$. In particular, if $B$ is a left module algebra of $H$, and $\gamma$ is a 2-cocycle of associative type, then $(B\ot H\ot B)^{\Gamma}$ is a left and anti-left Hopf algebroid.
\end{cor}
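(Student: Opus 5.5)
The plan is to check that $\Gamma$ is an admissible twisting cochain, to compute the twisted product and twisted cocycle from Theorem~\ref{twistprod1} explicitly on $\cL=B\ot H\ot B$, and to compare them term by term with the data of Theorem~\ref{thm. coquasi CM bialgebroid}.

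\textbf{Step 1: $\Gamma$ is admissible and the hypothesis of Theorem~\ref{thm. coquasi CM bialgebroid} holds.}
\begin{itemize}
\item[(a)] Well-definedness over $\ot_{B^e}$ and left $B^e$-linearity of $\Gamma$ follow from the measuring property. For instance, $t(c)X$ multiplies $a'$ on the left by $c$, and in $X\ot_{B^e}Y$ this can be moved into $Y$. Unitality follows from $h\la 1=\varepsilon(h)$ and from $\gamma$ being unital.
\item[(b)] For $\Gamma(X,Y\,\overline{d})=\Gamma(X,Y\,d)$, the two sides are $h\la(b(g\la d))$ followed by $\gamma(h,g)$, versus $\gamma$ followed by $h\la(g\la d)$. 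These agree because $B$ is a strict module algebra, $(hg)\la d=h\la(g\la d)$, together with the associative-type identity.
\item[(c)] The convolution inverse should be
\[\Gamma^{-1}(X,Y)=a(h\o\la b)\gamma^{-1}(h\t,g)(h\th\la b')a'.\]
This is checked using the Connes--Moscovici coproduct: only $h$ and $g$ are split, and the middle $1\ot_B 1$ collapses.
\item[(d)] The associative-type condition together with strictness gives
\[h\la(g\la b)=\gamma(h\o,g\o)\,((h\t g\t)\la b)\,\gamma^{-1}(h\th,g\th).\]
So Theorem~\ref{thm. coquasi CM bialgebroid} indeed applies to $\gamma$.
\end{itemize}

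\textbf{Step 2: the twisted product equals the $\gamma$-product.}
\begin{itemize}
\item[(a)] Compute $X\cdot_\Gamma Y=\Gamma(X\o,Y\o)\overline{\Gamma^{-1}(X\th,Y\th)}X\t Y\t$. Here $X\o=a\ot h\o\ot 1$, $X\t=1\ot h\t\ot1$, $X\th=1\ot h\th\ot a'$, and similarly for $Y$.
\item[(b)] The outer factors give $a(h\o\la b)\gamma(h\t,g\o)$ on the left, and $\gamma^{-1}(h_{(4)},g\th)(h_{(5)}\la b')a'$ on the right, through the target map.
\item[(c)] The middle product $X\t Y\t$ in the strict example is $1\ot h\th g\t\ot 1$.
\end{itemize}
Reassembling, this should reproduce exactly the product of Theorem~\ref{thm. coquasi CM bialgebroid}.

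\textbf{Step 3: $\Phi^\Gamma=\Phi^\gamma$ (main obstacle).}
Insert the formula for $\Phi^\Gamma$ from Theorem~\ref{twistprod1}, with $\Phi$ the cocycle of the strict example, and expand. The nested terms are $\Gamma(X\o,\Gamma(Y\o,Z\o)Y\t Z\t)$ and $\Gamma^{-1}(\overline{\Gamma^{-1}(X,Y)}XY,Z)$.
\begin{itemize}
\item[(a)] In the first, the inner $\Gamma$ sits in the $b$-slot, so it becomes $h\la\gamma(g,f)$. This produces $(h\la\gamma(g\t,f\o))\gamma(h,g f)$, as in $\Phi^\gamma$.
\item[(b)] In the second, the inner $\Gamma^{-1}$ enters through the target. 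After the strict module action it lands to the right, giving $\gamma^{-1}(hg,f)\gamma^{-1}(h,g)$.
\item[(c)] The trailing factors $(h\la(g\la c'))(h\la b')a'$ must appear in the same order as in $\Phi^\gamma$. To arrange this, repeatedly commute $\gamma$ or $\gamma^{-1}$ past terms $(\cdot\cdot)\la c'$ using the associative-type identity, and use strictness $(hg)\la c'=h\la(g\la c')$.
\end{itemize}
I expect the bookkeeping of Sweedler indices here to be the hardest part. I would first check the case $a=a'=b=b'=c=c'=1$, then restore the $B$-entries via bilinearity.

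\textbf{Step 4: the last claim.}
Here $H$ is a genuine Hopf algebra ($\phi=\varepsilon$) and $\gamma$ satisfies the 2-cocycle identity of the classical construction. Then in $\Phi^\gamma$ the block
\[(h\la\gamma(g,f))\gamma(h,gf)\gamma^{-1}(hg,f)\gamma^{-1}(h,g)\]
collapses to $\varepsilon$, so $\Phi^\gamma$ is trivial and $(B\ot H\ot B)^\Gamma$ is a genuine left bialgebroid, namely the twisted Connes--Moscovici one. For the Hopf properties, I would transport the translation maps given for trivial $\gamma$ by conjugating with $\Gamma$ and $\Gamma^{-1}$:
\[\lambda_\Gamma^{-1}(X\di1)=\Gamma^{-1}\text{-corrected version of }(b\ot h\o\ot1)\ot_{\BB}(S(h\th)\la b'\ot S(h\t)\ot1).\]
One then verifies directly that $\lambda_\Gamma$ and $\mu_\Gamma$ are bijective, inserting $\gamma^{-1}(S(h),h)$-type factors where the computation requires them. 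This is the analogue of the known fact that cocycle twists of Hopf algebroids remain left and anti-left Hopf \cite{HM22}.
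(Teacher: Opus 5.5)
Your Steps 1--3 are correct and follow the paper's own argument. You check $\Gamma\in C^2(\cL,B)$ using the associative-type identity and strictness, write down $\Gamma^{-1}$, and compute $\cdot_\Gamma$ and $\Phi^\Gamma$ from Theorem~\ref{twistprod1}. You also supply the detail the paper leaves implicit. After expanding, $\Phi^\Gamma$ and $\Phi^\gamma$ differ only in the tail: $((h\o g\o)\la c')\gamma^{-1}(h\t,g\t)$ versus $\gamma^{-1}(h\o,g\o)(h\t\la(g\t\la c'))$. These agree by strictness together with the associative-type identity convolved with $\gamma^{-1}$ on both sides. There is a small slip in 1(a): $t(c)X=a\ot h\ot a'c$, so $c$ lands to the right of $a'$, which gives $\Gamma(\overline{c}X,Y)=\Gamma(X,Y)c$. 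Balancing over $\ot_{B^e}$, i.e. moving $s(d)$ or $t(d)$ from the right of $X$ to the left of $Y$, is a separate check. Both properties follow from the measuring property.

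The weak point is Step 4, which as written is not an argument. The ``$\Gamma^{-1}$-corrected'' translation maps and the ``$\gamma^{-1}(S(h),h)$-type factors inserted where the computation requires them'' are never specified. So bijectivity of $\lambda_\Gamma$ and $\mu_\Gamma$ is not actually established.

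It is also unnecessary: \cite{HM22} is not merely an analogue but applies verbatim, and this is how the paper concludes.
\begin{itemize}
\item With $\phi$ trivial and $\gamma$ satisfying the cocycle identity, your Step 3 gives $\Phi^\Gamma=\Phi^\gamma$, and you have shown the latter is trivial.
\item Hence $\Gamma$ is a genuine unital convolution-invertible $2$-cocycle on the ordinary bialgebroid $B\ot H\ot B$ with the strict product. It also satisfies $\Gamma(X,Y\overline{b})=\Gamma(X,Yb)$ by your Step 1(b).
\item That untwisted bialgebroid is already left and anti-left Hopf, by the explicit $\lambda^{-1}$ and $\mu^{-1}$ given before the corollary. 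These use $S$ and $S^{-1}$, so $H$ must be an honest Hopf algebra with bijective antipode, as you implicitly assume.
\end{itemize}
\cite[Theorem 3.8 and Remark 3.9]{HM22} then gives directly that the cocycle twist $(B\ot H\ot B)^\Gamma$ is left and anti-left Hopf, with no further computation. Replace your sketch with this citation.
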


\begin{proof}
    By Theorem \ref{thm. coquasi CM bialgebroid}, we know both $(B\ot H\ot B, \Phi)$ and $(B\ot H\ot B, \Phi^{\gamma})$ are left $B$-coquasi bialgebroids. We first check $\Gamma\in C^{2}(\cL, B)$. Since $\gamma(\one{h}, \one{g})((\two{h}\two{g})\triangleright b)=((\one{h}\one{g})\triangleright b) \gamma(\two{h}, \two{g})$ we can see $\Gamma(X, Yb)=\Gamma(X, Y\overline{b})$ for any $d\in B$. Its inverse is
    \[\Gamma^{-1}(X, Y)=a(h\o \la b)\gamma^{-1}(h_{\scriptscriptstyle{(2)}}, g\o )(h_{\scriptscriptstyle{(3)}}\la b')a'.\]
 Finally, by Theorem \ref{twistprod1}, it is not hard to see
    \begin{align*}       X\cdot_{\Gamma}Y=a(h\o \la b)\gamma(h_{\scriptscriptstyle{(2)}}, g\o )\ot h_{\scriptscriptstyle{(3)}}g_{\scriptscriptstyle{(2)}}\ot \gamma^{-1}(h_{\scriptscriptstyle{(4)}}, g_{\scriptscriptstyle{(3)}})(h_{\scriptscriptstyle{(5)}}\la b')a',
    \end{align*}
    and
    \begin{align*}
        \Phi^{\Gamma}(X&, Y, Z)=a(h\o \la b)(h_{\scriptscriptstyle{(2)}}\la(g\o \la c))(h_{\scriptscriptstyle{(3)}}\la\gamma(g_{\scriptscriptstyle{(2)}}, f\o ))\gamma(h_{\scriptscriptstyle{(4)}}, g_{\scriptscriptstyle{(3)}}f_{\scriptscriptstyle{(2)}})\\
    &\phi(h_{\scriptscriptstyle{(5)}}, g_{\scriptscriptstyle{(4)}}, f_{\scriptscriptstyle{(3)}})\gamma^{-1}(h_{\scriptscriptstyle{(6)}}g_{\scriptscriptstyle{(5)}}, f_{\scriptscriptstyle{(4)}})\gamma^{-1}(h_{\scriptscriptstyle{(7)}}, g_{\scriptscriptstyle{(6)}})(h_{\scriptscriptstyle{(8)}}\la(g_{\scriptscriptstyle{(7)}}\la c'))(h_{\scriptscriptstyle{(9)}}\la b')a',
    \end{align*}
  for $X=a\ot h\ot a'$, $Y=b\ot g\ot b'$ and $Z=c\ot f\ot c'$. The remaining assertion follows from \cite[Theorem 3.8 and Remark 3.9]{HM22} since $B\ot H\ot B$ is a left and anti-left Hopf algebroid.
\end{proof}

\section{\bf Dual constructions }\label{secdual}

We conclude by explaining the dual picture. The constructions so far are coquasi in nature: the coproduct remains coassociative while the product is associative only 
up to a \(3\)-cocycle. Under suitable finite projectivity assumptions, one may dualise this picture. The result is a quasi-bialgebroid, where the algebra remains associative 
but the coproduct becomes coassociative only up to an invertible reassociator.

\subsection{Quasi-bialgebroids}

We now formulate the dual constructions corresponding to those of Section~\ref{seccoquasi}.

\begin{defi}\label{def:coright.bgd1} Let $B$ be a unital associative algebra. A left $B$-quasi-bialgebroid is a unital associative algebra $\cL$, commuting (`source' and `target') algebra maps $s:B\to \cL$ and $t:B^{op}\to \cL$ (and hence a $B^e$-bimodule) and a $B$-bimodule structure (\ref{eq:rbgd.bimod}) with a pair of bimodule maps (called coproduct and counit)
\[\Delta:\cL\to \cL\di\cL,\quad \varepsilon:\cL\to B\]
satisfy the following compatibility conditions:
\begin{itemize}
\item[(i)] The coproduct $\Delta$ corestricts to an algebra map  $\cL\to \cL\times_B \cL$ where
\begin{equation*} \cL\times_{B} \cL :=\{\ \sum_i X_i \ot_{B} Y_i\ |\ \sum_i X_i\,\overline{b} \ot_{B} Y_i=
\sum_i X_i \ot_{B}  Y_i \,b,\quad \forall b\in B\ \}\subseteq \cL\tens_B\cL,
\end{equation*}
 is an algebra via factorwise multiplication.
\item[(ii)] The counit $\varepsilon$ is a `left character' in the sense
\begin{equation*}\varepsilon(1_{\cL})=1_{B}, \quad \varepsilon(X\varepsilon(Y))=\varepsilon(XY)=\varepsilon(X\overline{\varepsilon(Y)})\end{equation*}
for all $X,Y\in \cL$ and $b\in B$.
\item[(iii)] $\overline{\varepsilon(X\t)}X\o=X=\varepsilon(X\o)X\t$, for all $X\in\cL$.
\item[(iv)] There is an invertible counital \(3\)-cocycle on \(\cL\), namely an invertible element $\Phi=\Phi^{1}\ot\Phi^{2}\ot\Phi^{3}\in \cL\times_{B}\cL\times_{B}\cL$ such that
\begin{align*}(1)\quad &(\varepsilon\ot_{B}\id\ot_{B}\id)\Phi=(\id\ot_{B}\varepsilon\ot_{B}\id)\Phi=(\id\ot_{B}\id\ot_{B}\varepsilon)\Phi=1,\\
(2)\quad & s(b)\Phi^{1}\ot_{B}\Phi^{2}\ot_{B}t(b')\Phi^{3}=\Phi^{1}s(b)\ot_{B}\Phi^{2}\ot_{B}\Phi^{3}t(b'), \forall b, b'\in B,\\
(3)\quad & (\id\ot_{B}\Delta)(\Delta(X))\Phi=\Phi(\Delta\ot_{B}\id)(\Delta(X)),\\
(4)\quad & (\id\ot_{B}\id_{B}\ot_{B}\Delta)(\Phi)(\Delta\ot_{B}\id_{B}\ot_{B}\id)(\Phi)=(1\ot_{B}\Phi)(\id\ot_{B}\Delta\ot_{B}\id)(\Phi)(\Phi\ot_{B} 1)
\end{align*}
for all $X\in \cL$.
\end{itemize}
\end{defi}

\begin{thm}\label{twistprod2}
    Let $(\cL, \Phi)$ be a left $B$-quasi-bialgebroid, and $F$ be a  counital invertible element in $\cL\times_{B}\cL$, such that $s(b)F^{\alpha}\ot t(b')F_{\alpha}=F^{\alpha}s(b)\ot F_{\alpha}t(b')$. If we define a new coproduct on $\cL$ by
    \begin{align*}
    \Delta^{F}(X):=F\Delta(X)F^{-1},
\end{align*}
then the underlying algebra $\cL$ with the new coproduct is a left $B$-quasi-bialgebroid $\CL_{F}$ with the twisted 3-cocycle given by
\begin{align*}
   (1\di F)((\id\di\Delta)F)\Phi((\Delta\di\id)\Phi^{-1})(\Phi\di 1),
\end{align*}
for all $X\in \cL$.
\end{thm}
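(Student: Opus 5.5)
The plan is to mirror the classical Drinfeld twist argument for quasi-bialgebras, keeping track at each step of where we are working: in the Takeuchi product $\cL\times_B\cL$ or in the balanced product $\cL\di\cL$. Write $F=F^{\alpha}\ot_B F_{\alpha}$ and $F^{-1}=F^{-\alpha}\ot_B F^{-}_{\alpha}$, and let $\Phi_F$ denote the proposed twisted cocycle.

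\emph{Step 1: $\Delta^F$ is well defined and multiplicative.} Products $F\Delta(X)F^{-1}$ make sense because $F$, $F^{-1}$ and $\Delta(X)$ all lie in the algebra $\cL\times_B\cL$, so $\Delta^F(X)\in\cL\times_B\cL$. Then
\[
\Delta^F(XY)=F\Delta(X)\Delta(Y)F^{-1}=\Delta^F(X)\Delta^F(Y).
\]
The hypothesis $s(b)F^{\alpha}\ot t(b')F_{\alpha}=F^{\alpha}s(b)\ot F_{\alpha}t(b')$ says that $F$ commutes with $\Delta(s(b))=s(b)\ot_B 1$ and with $\Delta(t(b'))=1\ot_B t(b')$. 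Hence $\Delta^F$ agrees with $\Delta$ on $s(B)$ and $t(B)$, and $\Delta^F$ is a $B$-bimodule map. The same commutation holds for $F^{-1}$.

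\emph{Step 2: counit.} Counitality of $F$ means $\varepsilon(F^{\alpha})F_{\alpha}=1=\overline{\varepsilon(F_{\alpha})}F^{\alpha}$, and this passes to $F^{-1}$ by uniqueness of inverses. The left character property of $\varepsilon$ then lets us pull $\varepsilon$ through products:
\[
\varepsilon(F^{\alpha}X\o F^{-\alpha})\,F_{\alpha}X\t F^{-}_{\alpha}=X.
\]
Here we use $\varepsilon(Ya)=\varepsilon(Y\varepsilon(a))$ together with the Takeuchi condition to move the scalars $\varepsilon(\cdot)$ to the right legs as $s$-factors. So (iii) holds for $\Delta^F$.

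\emph{Step 3: cocycle axioms for $\Phi_F$.} Set
\[
\Phi_F=(1\ot_B F)\,\bigl((\id\ot_B\Delta)F\bigr)\,\Phi\,\bigl((\Delta\ot_B\id)F^{-1}\bigr)\,(F^{-1}\ot_B 1).
\]
This is how I read the stated formula, correcting the evident typo in its last two factors. It lies in $\cL\times_B\cL\times_B\cL$ and is invertible, with the obvious reversed inverse. Its normalisation (1) follows from counitality of $F$ and Step 2. Its $s,t$-commutation (2) follows from that of $\Phi$ and $F$.

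\emph{Step 4: axiom (3).} Compute
\[
(\id\ot\Delta^F)\Delta^F(X)=(1\ot F)\bigl((\id\ot\Delta)F\bigr)\,(\id\ot\Delta)\Delta(X)\,\bigl((\id\ot\Delta)F^{-1}\bigr)(1\ot F^{-1}).
\]
The analogous expression holds on the other side. Conjugating by $\Phi$ via (3) for $\Phi$ then gives the result.

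\emph{Step 5: the pentagon (4) for $\Phi_F$.} This is the standard telescoping calculation, done entirely in the four-fold Takeuchi product. Expand both sides, use multiplicativity of $\Delta$ and of $\Delta^F$, and use the twisted quasi-coassociativity from Step 4 to move $\Delta^F$ past $\Phi$. The $F$-factors then cancel pairwise, leaving (4) for $\Phi$.

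The main obstacle is Step 5, and to a lesser extent Steps 3 and 4. The issue is that expressions like $(\id\ot_B\Delta)F$ and $1\ot_B F$ must be multiplied inside $\cL\times_B\cL\times_B\cL$, which is not a priori an algebra receiving these maps. I would handle this by working in the Takeuchi-type subspace $\int^{a,b}\int_{c,d}$ of $\cL^{\ot_B 3}$, which is an algebra under factorwise multiplication. I would also verify that $(\id\ot\Delta)$ and $(\Delta\ot\id)$ map $\cL\times_B\cL$ into it. This uses that $\Delta$ is a $B^e$-bimodule map into $\cL\times_B\cL$ and that $\Delta(X\overline{b})=\Delta(X)(1\ot\overline{b})$. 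The same check is needed in four factors. Once this is set up, the algebra is formally identical to Drinfeld's proof, and I would present it at that level.
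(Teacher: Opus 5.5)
Your proposal is correct and follows the same route as the paper, whose proof simply defers to the classical Drinfeld-twist computation over a field. Your correction of the typo to $(1\ot_B F)((\id\ot_B\Delta)F)\Phi((\Delta\ot_B\id)F^{-1})(F^{-1}\ot_B 1)$ is right, and your Takeuchi bookkeeping supplies what the paper omits: the hypothesis on $F$ is exactly what puts $1\ot_B F$ and $F\ot_B 1$ into the triple Takeuchi product, and $\varepsilon\ot_B\id$, $\id\ot_B\varepsilon$ are algebra maps on $\cL\times_B\cL$. One small correction to Step 5: the commutation you actually need there is axiom (3) for the original $\Phi$ applied to the legs of $F^{\pm1}$, e.g. $(1\ot_B\Phi)\,(\id\ot_B(\Delta\ot_B\id)\Delta)F=(\id\ot_B(\id\ot_B\Delta)\Delta)F\,(1\ot_B\Phi)$, rather than the twisted identity from Step 4.
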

\proof The verification is straightforward and is omitted, since it follows the same pattern as the usual construction over a field; see, for example, \cite{Ma:book}. \endproof
Recall that given a left $B^e$-module $M$, $M^{\vee}:=\Hom_{\BB-}(M,\,B)$. By using \cite[Theorem 5.13]{schau1},  we  have the following lemma:
\begin{lem}
    Let $\cL$ be a left coquasi-bialgebroid over $B$ which is a finitely generated projective as a right B module induced by the target map on the left, then $\Lambda:=\cL^{\vee}$ is a left quasi bialgebroid over $B$.  Moreover, we have $(\cL^{\Gamma_{F}^{-1}})^{\vee}\simeq \Lambda_{F}$ for any $F\in \Lambda\times_{B}\Lambda$ as in Theorem \ref{twistprod2}.
\end{lem}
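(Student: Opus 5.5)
Following \cite[Theorem 5.13]{schau1}, I would use the finite projectivity of $\cL$ (as a right $B$-module via the target map) to obtain the standard identifications
\[
(\cL\otimes_{B^e}\cL)^{\vee}\cong \Lambda\times_B\Lambda,\qquad (\cL\otimes_{B^e}\cL\otimes_{B^e}\cL)^{\vee}\cong\Lambda\times_B\Lambda\times_B\Lambda .
\]
These are induced by the pairing $\langle \xi\otimes\zeta, X\otimes Y\rangle=\xi(Y\,\overline{\zeta(X)})$ or its analogue, in whichever order matches the conventions of Theorem~\ref{twistprod1}. Under these identifications convolution in $\Hom_{B^e-}(\cL^{\otimes_{B^e}n},B)$ corresponds to the factorwise product in the iterated Takeuchi product. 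In particular, $\Phi$ and $\Phi^{-1}$ give mutually inverse elements $\Phi_\Lambda$ and $\Phi_\Lambda^{-1}$ of $\Lambda\times_B\Lambda\times_B\Lambda$.

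\textbf{Step 1: the structure on $\Lambda$.} The product on $\Lambda$ is the transpose of $\Delta$. Associativity of this product uses only coassociativity of $\Delta$ (condition (1) of Definition~\ref{def:left.bgd}), so $\Lambda$ is an honest associative algebra, with unit given by $\varepsilon$. The coproduct $\Delta_\Lambda:\Lambda\to\Lambda\di\Lambda$ is the transpose of the nonassociative product $\mu$ of $\cL$, via $\Hom(\cL\otimes_{B^e}\cL,B)\cong\Lambda\di\Lambda$. The counit is $\xi\mapsto\xi(1_\cL)$.

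\textbf{Step 2: the axioms.} I would check each axiom of Definition~\ref{def:coright.bgd1} against its counterpart in Definition~\ref{def:coright.bgd}.
\begin{itemize}
\item $\Delta_\Lambda$ lands in $\Lambda\times_B\Lambda$ and is multiplicative. This is dual to $\Delta$ being an algebra map into $\cL\times_B\cL$ (condition (i)).
\item Counitality (iii) is dual to the unit axioms of $\cL$.
\item Condition (ii) is dual to the counit axioms of $\cL$.
\item The normalisation (1) and the balance condition (2) for $\Phi_\Lambda$ come from (iii)(1) and (iii)(2).
\item Quasi-coassociativity (3) is the transpose of (iii)(4). The left side $(\id\otimes\Delta_\Lambda)\Delta_\Lambda(\xi)\Phi_\Lambda$ pairs with $X\otimes Y\otimes Z$ to give $\xi$ evaluated on $\Phi(X\o,Y\o,Z\o)(X\t Y\t)Z\t$, up to ordering conventions. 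The right side similarly gives $\xi$ on $\overline{\Phi(\cdots)}X\o(Y\o Z\o)$.
\item The pentagon (4) is the transpose of the $3$-cocycle identity (iii)(3).
\end{itemize}
Since all pairings are nondegenerate by finite projectivity, equality of pairings gives equality of elements.

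\textbf{Step 3: twisting.} Let $\Gamma_F\in\Hom_{B^e-}(\cL\otimes_{B^e}\cL,B)$ be the $2$-cochain corresponding to $F$ under the identification above. The balance condition on $F$ translates into $\Gamma_F(X,Y\overline b)=\Gamma_F(X,Yb)$, and $F^{-1}$ corresponds to $\Gamma_F^{-1}$. Hence Theorem~\ref{twistprod1} applies to $\Sigma=\Gamma_F^{-1}$. The twisted product $X\cdot_\Sigma Y=\Sigma(X\o,Y\o)\overline{\Sigma^{-1}(X\th,Y\th)}X\t Y\t$ transposes to $\xi\mapsto F\Delta_\Lambda(\xi)F^{-1}$. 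Whether one gets $\Gamma_F$ or $\Gamma_F^{-1}$ here depends on the pairing order; I would fix the convention so that this matches. Finally, $\Phi^{\Sigma}$ pairs to the formula in Theorem~\ref{twistprod2}. The outer $\Sigma$ and $\Sigma^{-1}$ factors there, with nested arguments like $\Sigma(X\o,\Sigma(Y\o,Z\o)Y\t Z\t)$, correspond to $(1\di F)((\id\di\Delta)F)$ and $((\Delta\di\id)F^{-1})(F^{-1}\di 1)$.

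\textbf{Main obstacle.} I expect the hardest part to be the bookkeeping of the identification $(\cL^{\otimes_{B^e}3})^\vee\cong\Lambda^{\times_B3}$. The iterated Takeuchi product is not associative, so the maps $\alpha,\alpha'$ must be shown to be isomorphisms under finite projectivity. One must also check that the nested source/target actions, such as $\Sigma(\cdot,s(\Sigma(\cdot))\cdot)$ or $\overline{\Sigma^{-1}(\cdot)}$ inside arguments, correspond precisely to multiplication in the correct tensor legs. I would first establish this identification for $n=2$ and $n=3$ as a lemma, building on \cite[Theorem 5.13]{schau1}. After that, each axiom reduces to a one-line transpose.
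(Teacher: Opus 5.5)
Your proposal is correct in outline and follows essentially the same route as the paper. The paper's proof likewise rests on Schauenburg's duality theorem, on the pairing isomorphism $\hat{\Phi}(\alpha\ot_B\beta\ot_B\gamma)(X\ot_{B^e}Y\ot_{B^e}Z)=\lan\alpha|X\,t\lan\beta|Y\,t\lan\gamma|Z\ran\ran\ran$ between $\Lambda\times_B\Lambda\times_B\Lambda$ and $3$-cochains, and on the $2$-cochain $\Gamma_F(X,Y)=\lan F^{\alpha}|X\overline{\lan F_{\alpha}|Y\ran}\ran$, and it omits all the verifications that you sketch in more detail. You are also right to read the twisted associator of Theorem~\ref{twistprod2} as the standard Drinfeld formula $(1\di F)((\id\di\Delta)F)\Phi((\Delta\di\id)F^{-1})(F^{-1}\di 1)$; the $\Phi^{-1}$ and $\Phi$ in the last two factors of the paper's display are evidently misprints.
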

\begin{proof}
    The 3-cocycle correspondence can be given by the following isomorphism: $\hat{\Phi}: \cL^{\vee}\times_{B}\cL^{\vee}\times_{B}\cL^{\vee}\to \Hom_{B^{e}-}(\cL\otimes_{B^e}\cL\otimes_{B^e}\cL, B)$ by
    \[\hat{\Phi}(\alpha\ot_{B}\beta\ot_{B}\gamma)(X\ot_{B^{e}}Y\ot_{B^{e}}Z)=\lan\alpha|X t\lan \beta|Y t\lan \gamma|Z\ran\ran\ran.\]
    Moreover, given $F=F^{\alpha}\ot F_{\alpha}\in \Lambda\times_{B}\Lambda$, we can construct $\Gamma_{F}\in C^2(\cL, B)$ by 
    \[\Gamma_{F}(X, Y)=\<F^{\alpha}|X\overline{\<F_{\alpha}|Y\>}\>.\]
We will omit the details for the rest of the proof.
\end{proof}

\subsection*{Acknowledgements}  XH was supported by Marie Curie Fellowship HADG - 101027463 agreed between QMUL  and the  European Commission.

\medskip\noindent{\bf Data availability}  Data sharing is not applicable as no data sets were generated or analysed during the current
 study.

\section*{Declarations}

{\bf Conflict of interest} On behalf of all authors, the corresponding author states that there is no conflict of interests.

\appendix

\renewcommand\refname

\end{document}